\newtheorem{Th}{Theorem}[section]
\newtheorem{Lem}[Th]{Lemma}
\newtheorem{Cor}[Th]{Corollary}
\newtheorem{Rem}[Th]{Remark}
\newcommand{\vp}{\varphi}
\newcommand{\eps}{\varepsilon}
\newcommand{\R}{\mathbb{R}}
\newcommand{\cB}{{\mathcal B}}
\newcommand{\cC}{{\mathcal C}}
\newcommand{\cD}{{\mathcal D}}
\newcommand{\cE}{{\mathcal E}}
\newcommand{\cG}{{\mathcal G}}
\newcommand{\cH}{{\mathcal H}}
\newcommand{\cI}{{\mathcal I}}
\newcommand{\cJ}{{\mathcal J}}
\newcommand{\cM}{{\mathcal M}}
\newcommand{\cN}{{\mathcal N}}
\newcommand{\cO}{{\mathcal O}}
\newcommand{\cP}{{\mathcal P}}
\newcommand{\cS}{{\mathcal S}}
\newcommand{\U}{{\mathbf U}}
\newcommand{\weakto}{\rightharpoonup}
\newcommand{\curl}{\nabla \times}
\renewcommand{\div}{\mathrm{div}\,}
\numberwithin{equation}{section}
\newcommand{\supp}{\mathrm{supp}\,}
\begin{document}

\title{Semiclassical states for the curl-curl problem}

\author[B. Bieganowski]{Bartosz Bieganowski}
\address[B. Bieganowski]{\newline\indent
			Faculty of Mathematics, Informatics and Mechanics, \newline\indent
			University of Warsaw, \newline\indent
			ul. Banacha 2, 02-097 Warsaw, Poland}	
			\email{\href{mailto:bartoszb@mimuw.edu.pl}{bartoszb@mimuw.edu.pl}}	

\author[A. Konysz]{Adam Konysz}
\address[A. Konysz]{\newline\indent  	
			Faculty of Mathematics and Computer Science,		\newline\indent 
			Nicolaus Copernicus University, \newline\indent 
			ul. Chopina 12/18, 87-100 Toru\'n, Poland}
			\email{\href{mailto:adamkon@mat.umk.pl}{adamkon@mat.umk.pl}}
			
\author[J. Mederski]{Jarosław Mederski}
\address[J. Mederski]{\newline\indent  	
			Institute of Mathematics,		\newline\indent 
			Polish Academy of Sciences, \newline\indent 
			ul. \'Sniadeckich 8, 00-656 Warsaw, Poland}
			\email{\href{mailto:jmederski@impan.pl}{jmederski@impan.pl}}

\date{}	\date{\today}

\begin{abstract} 
We show the existence of the so-called semiclassical states $\U:\R^3\to\R^3$ to the following curl-curl problem
$$
\varepsilon^2\; \curl (\curl \U) + V(x) \U = g(\U),
$$
for sufficiently small $\varepsilon > 0$. We study the asymptotic behaviour of solutions as $\eps\to 0^+$ and we investigate also a related nonlinear Schr\"odinger equation involving a singular potential. The problem models large permeability nonlinear materials satisfying the system of Maxwell equations.

\medskip

\noindent \textbf{Keywords:} variational methods, singular potential, nonlinear Schr\"odinger equation, Maxwell equations, time-harmonic waves, semiclassical limit
   
\noindent \textbf{AMS Subject Classification:} 35Q60, 35Q55, 35A15, 35J20, 58E05
\end{abstract}

\maketitle

\pagestyle{myheadings} \markboth{\underline{B. Bieganowski, A. Konysz, J. Mederski}}{
		\underline{Semiclassical states for the curl-curl problem}}

\section{Introduction}

We look for time-harmonic wave field solving the system of {\em Maxwell equations} of the form
$$
\left\{ \begin{array}{l}
\curl \cH = \partial_t \cD,\\
\div (\cD) = 0,\\
\partial_t\cB+\curl \cE = 0,\\
\div (\cB) = 0,
\end{array} \right.
$$
where $\cE$ is the electric field, $\cB$ is the magnetic field, $\cD$ is the electric displacement field and $\cH$ denotes the magnetic induction.  In the absence of charges, currents and magnetization, we consider also the following constitutive relations ({\em material laws})
$$
\begin{cases}
\cD=\epsilon(x) \cE + \cP_{NL}, \\
\cH=\mu^{-1} \cB,
\end{cases}
$$
where $\cP_{NL}=\chi(\langle |\cE|^2\rangle)\cE$ is the nonlinear polarization, $\langle |\cE(x)|^2\rangle = \frac{1}{T}\int_0^T |\cE(x)|^2\,dt$ is the average intensity of a time-harmonic electric field over one period $T=2\pi/\omega$, $\epsilon(x)\in\R$ is the permittivity of the medium, $\mu>0$ is the constant magnetic permeability, and $\chi$ is the scalar nonlinear susceptibility which depends on the time averaged intensity of $\cE$ only. For instance, the probably most common type of nonlinearity in the physics and engineering literature, is the {\em Kerr nonlinearity} of the form
$\chi(\langle |\cE|^2 \rangle)  = \chi^{(3)}\langle |\cE|^2\rangle$, but we will able to treat a more general class of nonlinear phenomena.

Such situations were widely studied from the physical and mathematical point of view \cite{Stuart91,Stuart:1993,FundPhotonics} and recall that
taking the curl of Faraday's law, i.e. the third equation in the Maxwell system, and inserting the material laws together with Amp\'{e}re's law we find that $\cE$ has to satisfy the {\em nonlinear electromagnetic wave equation}
\begin{equation*} 
\nabla\times\big(\mu^{-1}\nabla \times \cE\big) + \partial_{tt}\left(\epsilon(x) \cE + \chi(\langle |\cE|^2\rangle)\cE\right)=0\quad\hbox{for }(x,t)\in\R^3\times\R.
\end{equation*}
Looking for time-harmonic fields of the form $\cE(x,t)= \mathbf{U}(x) \cos(\omega t)$, $\U:\R^3\to\R^3$, the above equation leads to the {\em curl-curl problem}
\begin{equation}\label{eq:maxwell}
\mu^{-1} \curl (\curl \mathbf{U}) + V(x)\mathbf{U} = g(\mathbf{U}), \quad x \in \R^3
\end{equation}
with $V(x):= - \omega^2 \epsilon(x)$ and $g(\mathbf{U}):=\omega^2\chi\big(\frac12|\mathbf{U}|^2\big)$. Note that having solved \eqref{eq:maxwell}, hence also the nonlinear electromagnetic wave equation, one obtains the electric displacement field $\cD$ directly from the constitutive relations and the magnetic induction
$\cB$ may be obtained by time integrating Faraday's law with divergence free initial condition. Moreover, we also get the magnetic field 
$\cH=\mu^{-1} \cB$. Altogether, we find {\em exact propagation} of the electromagnetic field in the nonlinear medium according to the Maxwell equations  with the time-averaged material law,  see also \cite{Stuart91,Stuart:1993,FundPhotonics,Med,BM1}. It is worth mentioning that the exact propagation in nonlinear optics plays a crucial role and, e.g. cannot by studied by approximated models, see \cite{Akhmediev-etal, Ciattoni-etal:2005} and references therein. Therefore,  in this paper, we are interested in exact time-harmonic solutions of the Maxwell equations.

The nonlinear curl-curl problem \eqref{eq:maxwell} has been recently studied e.g. in \cite{BM1,BM2} on a bounded domain and in \cite{BDPR,Med,MSSz} on $\R^3$, see also the survey \cite{MedSch} and references therein. 
In all these works the asymptotic role of the magnetic permeability was irrelevant from the mathematical point of view and therefore it was assumed that $\mu=1$, or on a bounded domain $\mu$ was a bounded $3\times 3$-tensor \cite{BM2,BM3}. In the present paper we study the asymptotic behaviour of the problem with permeability $\mu\to\infty$, and simultaneously we admit a wide range of permittivity expressed in terms of $V\in \cC(\R^3)$ as follows:
\begin{enumerate}
	\item [(V1)] $0<V_0 := \inf V\leq V(0)<V_\infty\leq \liminf_{|x|\to+\infty} V(x)$
\end{enumerate}
for some $V_\infty\in\R$ and the last limit may be infinite.
In the physics literature, the positive extremely large permeability in magnetic
materials is usually due to the formation of magnetic domains \cite{Grimberg,Chew}, while (V1) models the so-called 
epsilon-negative materials \cite{Vanbesien,Chew}.

From the mathematical point of view, setting $\varepsilon^2 :=\mu^{-1}$ in \eqref{eq:maxwell}, since $\varepsilon^2 \curl ( \curl \U)=\curl ( \curl \mathbf{U}(\varepsilon \cdot))$ and replacing $\mathbf{U}(\varepsilon \cdot)$ by $\mathbf{U}$ we end up with the following problem
\begin{equation}\label{eq:curlcurl}
\curl (\curl \U) + V_\varepsilon(x) \U = g(\U),
\end{equation}
where $V_\varepsilon (x) := V(\varepsilon x)$, and $G:\R^3\to\R$ is responsible for the nonlinear effect and $g:=\nabla G$. From now on we do not use the notation of the permittivity $\epsilon(x)$. Our aim is to investigate \eqref{eq:curlcurl} in the limit $\eps\to 0^+$.

Due to the strongly indefinite nature of \eqref{eq:curlcurl}, e.g. the curl-curl operator $\curl (\curl \cdot)$ contains an infinite dimensional kernel, we introduce the cylindrical symmetry and, as in \cite{GMS}
we look for solutions of the form 
\begin{equation}\label{eq:Uu}
\U(x) = \frac{u(r, x_3)}{r} \left( \begin{array}{c} -x_2 \\ x_1 \\ 0 \end{array} \right), \quad r = \sqrt{x_1^2+x_2^2},\; x=(x_1,x_2,x_3),
\end{equation}
which leads to the following Schr\"odinger equation
\begin{equation}\label{eq:singular}
-\Delta u +\frac{u}{|y|^2}+V_\varepsilon (x)u=f(u)\quad
\hbox{ for } x = (y,z) \in \R^N = \R^{K} \times \R^{N-K}
\end{equation}
with $N=3$, $K=2$ and $g(\alpha w)=f(\alpha)w$ for $\alpha\in\R$, $w\in \R^3$ such that $|w|=1$.

In what follows, $\lesssim$ denotes the inequality up to a multiplicative constant.

In general, let $N\geq 3$, $2^*=\frac{2N}{N-2}$, and we consider the following assumptions on $f$.
\begin{enumerate}
\item[(F1)] $f : \R \rightarrow \R$ is continuous and there is $p \in (2, 2^*)$ such that
$$
|f(u)| \lesssim 1 + |u|^{p-1}.
$$
\item[(F2)] $f(u)=o(u)$ as $u\to 0$.
\item[(F3)]$\frac{F(u)}{u^2}\to+\infty$ as $|u|\to\infty$, where $F(u) := \int_0^u f(s) \, ds$.
\item[(F4)] $\frac{f(u)}{|u|}$ is increasing on $(-\infty,0)$ and on $(0,\infty).$
\end{enumerate}
In a similar way as in \cite[Theorem 2.1]{GMS} (cf. \cite{BB,BMS}) weak solutions  to \eqref{eq:singular} correspond to weak solutions of the form \eqref{eq:Uu} to \eqref{eq:curlcurl}. Clearly, concerning the Kerr nonlinearity one has $f(u)=\frac12\chi^{(3)}|u|^2u$, $\chi^{(3)}>0$, $N=3$, and the above assumptions are satisfied.

Let $\cO(K)$ denote the orthogonal group acting on $\R^K$, $K\geq 2$, and let  $\cG(K) := \cO(K) \times I_{N-K}\subset \cO(N)$ for $N>K\geq 2$.  Let $V\in \cC^{\cG(K)}(\R^N)$ be a continuous potential invariant with respect to $\cG(K)$.
The first main result reads as follows.

\begin{Th}\label{thm:main1}
Suppose that $V\in\cC^{\cG(K)}(\R^N)$, $N>K\geq 2$, and (V1), (F1)--(F4) hold. Then there exists $\varepsilon_0>0$ such that for any $\varepsilon\in (0,\varepsilon_0)$, \eqref{eq:singular} has a nontrivial weak solution $u_\varepsilon$, which is invariant with respect to $\cG(K)$. Moreover, if $f$ is odd, then $u_\eps\in L^{\infty}(\R^N)$ is nonnegative and
$$
\limsup_{|x|\to\infty} |x|^\nu u_\eps (x) = 0
$$
for any $\nu < \frac{N-2+\sqrt{(N-2)^2+4}}{2}$.
\end{Th}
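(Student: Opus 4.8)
The plan is to obtain $u_\eps$ as a ground state on a Nehari manifold inside the symmetric subspace, and to restore the missing compactness from a level estimate that pins the energy below the ``threshold at infinity.'' First I would fix the energy space $X_\eps$ obtained by completing $C_c^\infty(\R^N\setminus(\{0\}\times\R^{N-K}))$ with respect to $\|u\|_\eps^2:=\int_{\R^N}|\nabla u|^2+\tfrac{u^2}{|y|^2}+V_\eps(x)u^2\,dx$; by Hardy's inequality this controls $\int|\nabla u|^2$, and since $V_\eps\geq V_0>0$ one gets continuous embeddings $X_\eps\hookrightarrow L^q(\R^N)$ for $q\in[2,2^*]$, so by (F1)--(F2) the functional $J_\eps(u):=\tfrac12\|u\|_\eps^2-\int_{\R^N}F(u)\,dx$ is of class $C^1$. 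Working on the closed subspace $X_\eps^{\cG(K)}$ of $\cG(K)$-invariant functions, Palais' principle of symmetric criticality reduces the problem to finding a nontrivial critical point of $J_\eps|_{X_\eps^{\cG(K)}}$. Thanks to (F2)--(F4) this is a mountain-pass/Nehari situation: $\cN_\eps:=\{u\in X_\eps^{\cG(K)}\setminus\{0\}:J_\eps'(u)u=0\}$ is a $C^1$-manifold, every ray from the origin meets it exactly once, $c_\eps:=\inf_{\cN_\eps}J_\eps=\inf_{u\neq0}\max_{t>0}J_\eps(tu)>0$ equals the mountain-pass level, and there is a Palais--Smale sequence for $J_\eps$ on $\cN_\eps$ at level $c_\eps$, which is bounded by (F3).

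The second step is a level estimate. For a constant $a>0$ let $J^a$, $c^a$ denote the functional and Nehari level of the autonomous problem $-\Delta u+\tfrac{u}{|y|^2}+au=f(u)$ on $X^{\cG(K)}$; then $c^a\in(0,\infty)$ and $a\mapsto c^a$ is strictly increasing (using that this autonomous problem possesses a positive ground state, which follows as in the references on autonomous curl--curl problems). Since $V_\eps\to V(0)$ uniformly on compact sets as $\eps\to 0^+$, testing $J_\eps$ along $t\mapsto tw$ for a compactly supported near-minimizer $w$ of $c^{V(0)}$ gives $\limsup_{\eps\to0^+}c_\eps\leq c^{V(0)}$, and combining this with $c^{V(0)}<c^{V_\infty}$ (a consequence of (V1)) yields $\eps_0>0$ such that $c_\eps<c^{V_\infty}$ for every $\eps\in(0,\eps_0)$.

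The third step — and the main obstacle — is compactness. Let $(u_n)\subset X_\eps^{\cG(K)}$ be a bounded Palais--Smale sequence at level $c_\eps$ with $u_n\rightharpoonup u_\eps$. Within the symmetry class the only non-compact direction is translation in $z\in\R^{N-K}$, which leaves $\tfrac{u^2}{|y|^2}$ invariant but, along $|z_n|\to\infty$, sends $V_\eps(y,z-z_n)$ to a limiting potential bounded below by $\liminf_{|x|\to\infty}V\geq V_\infty$. A concentration-compactness / Brezis--Lieb type splitting then shows that if $u_n\not\to u_\eps$ strongly, a nontrivial ``bubble'' escapes along such $z$-translations and solves a limiting equation whose potential is $\geq V_\infty$, hence carries energy $\geq c^{V_\infty}$ (if $\liminf_{|x|\to\infty}V=+\infty$ no such bubble can survive and compactness is immediate); since $J_\eps(u_\eps)\geq0$ on $\cN_\eps$, this forces $c_\eps=\lim_n J_\eps(u_n)\geq J_\eps(u_\eps)+c^{V_\infty}\geq c^{V_\infty}$, contradicting the second step. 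Therefore $u_n\to u_\eps$, so $u_\eps\in\cN_\eps$, $J_\eps(u_\eps)=c_\eps$, and $u_\eps$ is the desired nontrivial $\cG(K)$-invariant weak solution of \eqref{eq:singular}.

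Finally, assume $f$ odd. Then $F$ is even, so $J_\eps(|u_\eps|)=J_\eps(u_\eps)=c_\eps$ and, by uniqueness of the Nehari scaling, $|u_\eps|\in\cN_\eps$; hence we may take $u_\eps\geq0$. From $u_\eps\in X_\eps\hookrightarrow L^{2^*}(\R^N)$, the subcritical growth (F1)--(F2) and a Brezis--Kato/Moser iteration applied to the differential inequality $-\Delta u_\eps\leq f(u_\eps)\lesssim u_\eps+u_\eps^{p-1}$ (discarding the non-negative terms $\tfrac{u_\eps}{|y|^2}$ and $V_\eps u_\eps$) give $u_\eps\in L^q(\R^N)$ for all $q<\infty$, and then $u_\eps\in L^\infty(\R^N)$ with $u_\eps(x)\to0$ as $|x|\to\infty$. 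For the decay, fix $R$ so large that $f(u_\eps)\leq\tfrac{V_0}{2}u_\eps\leq V_\eps u_\eps$ on $\{|x|>R\}$; then $-\Delta u_\eps+\tfrac{u_\eps}{|y|^2}\leq0$ there, while the barrier $\Phi_\nu(x):=M|x|^{-\nu}$ satisfies
$$
-\Delta\Phi_\nu+\frac{\Phi_\nu}{|y|^2}=\Big(\nu(N-2-\nu)+\tfrac{|x|^2}{|y|^2}\Big)M|x|^{-\nu-2}\geq\big(\nu(N-2-\nu)+1\big)M|x|^{-\nu-2}\geq0
$$
precisely when $\nu^2-(N-2)\nu-1\leq0$, i.e. $\nu\leq\nu_*:=\frac{N-2+\sqrt{(N-2)^2+4}}{2}$. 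Choosing $M$ so that $\Phi_\nu\geq u_\eps$ on $\{|x|=R\}$, the maximum principle for $-\Delta+\tfrac1{|y|^2}$ on $\{|x|>R\}$ gives $u_\eps\leq M|x|^{-\nu}$ there; applying this with some $\nu'\in(\nu,\nu_*)$ yields $|x|^\nu u_\eps(x)\leq M|x|^{\nu-\nu'}\to0$, which is the claim. The technical point in this last step is to justify the comparison argument across the singular set $\{y=0\}$, on which the weighted space requires extra care.
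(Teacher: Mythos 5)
Your existence argument follows the same blueprint as the paper's: Nehari manifold in the $\cG(K)$-invariant subspace, Palais' principle of symmetric criticality, a concentration--compactness dichotomy that says either a minimizer exists or $c_\eps\geq m_{V_\infty}$ (this is exactly the paper's Theorem \ref{nierownosc}), and a level estimate showing $c_\eps<m_{V_\infty}$ for small $\eps$. Your implementation of the level estimate is slightly different: you bound $\limsup_{\eps\to0^+}c_\eps\leq m_{V(0)}$ by testing with a truncated minimizer of the autonomous problem at $V(0)$ and then invoke strict monotonicity $m_{V(0)}<m_{V_\infty}$, whereas the paper tests with a truncated minimizer of $\Phi_{V_\infty}$ and extracts a quantitative gap $\gamma_R\geq c_\eps+\tfrac14(V_\infty-V(0))\hat\theta^2\int v^2$ together with $\gamma_R\leq m_{V_\infty}+\psi(R)$. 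Both are valid; yours is marginally cleaner to state, the paper's is more self-contained in that it doesn't require invoking strict monotonicity of $k\mapsto m_k$ as a separate lemma. The nonnegativity for odd $f$ via $|u_\eps|$ is identical.

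For the decay there is a genuine, albeit fixable, gap. You run the comparison principle for $-\Delta+|y|^{-2}$ on $\Omega:=\{|x|>R\}$, but for $K=2$ the potential $|y|^{-2}$ is not locally integrable across the set $\{y=0\}\cap\Omega$, which is nonempty and of positive $(N-K)$-dimensional measure; the weak supersolution inequality $\int\nabla\Phi_\nu\nabla\varphi+\int\frac{\Phi_\nu\varphi}{|y|^2}\geq 0$ is therefore not well-defined a priori for arbitrary nonnegative $\varphi\in C_c^\infty(\Omega)$, and a maximum principle for $-\Delta+|y|^{-2}$ on $\Omega$ needs careful justification, as you flag yourself. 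The paper sidesteps this entirely: it applies the pointwise inequality $|y|^{-2}\geq|x|^{-2}$ on the subsolution side as well, so that the comparison takes place for $-\Delta+\lambda|x|^{-2}$, whose potential is smooth and bounded on $\Omega$. You already use $|x|^2/|y|^2\geq 1$ to estimate the barrier; the fix is simply to also pass from $-\Delta u_\eps+|y|^{-2}u_\eps\leq 0$ to $-\Delta u_\eps+|x|^{-2}u_\eps\leq 0$ before comparing with $\Phi_\nu$, which then satisfies $-\Delta\Phi_\nu+|x|^{-2}\Phi_\nu=(\nu(N-2-\nu)+1)M|x|^{-\nu-2}\geq 0$. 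Finally, the preliminary steps ``$u_\eps\in L^\infty$ and $u_\eps(x)\to 0$ as $|x|\to\infty$'' via Moser iteration are sketched rather than proved; the singular weight $|y|^{-2}$ does not disappear in the iteration (it helps on one side of the inequality but requires care on the test-function side), which is why the paper invokes the tailored bootstrap of Badiale--Benci--Rolando and Egnell that accounts for the singular potential, obtaining the stronger intermediate estimate $\limsup_{|x|\to\infty}|x|^{N-2}u_\eps(x)<\infty$ before the barrier step.
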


A weak solution to \eqref{eq:singular} is a critical point of the energy functional
$\cJ_\varepsilon \colon X_\eps \rightarrow \R$:
\begin{equation}\label{eq:J-eps}
\cJ_\varepsilon (u):=\frac12\int_{\R^N}|\nabla u|^2+\frac{u^2}{|y|^2}+V_\varepsilon (x)u^2 \,dx-\int_{\R^N}F(u) \, dx
\end{equation}
defined on 
$$X_\eps:=\Big\{u\in H^1(\R^N)\ : \ \int_{\R^N}\frac{u^2}{|y|^2}+V_\eps(x)u^2\,dx<\infty\Big\}.$$ 
Recall that solutions to \eqref{eq:singular} with $\eps\to 0^+$ are the so-called  \textit{semiclassical states}. Recently many papers have been devoted to study semiclassical states for the Schr\"odinger equation, see eg. \cite{delPinoCV, ByeonJeanjean,ByeonTanaka,delPinoJFA,dAvenia,Rabinowitz,Wang}  and references therein, however the usual techniques are difficult to apply to the Schr\"odinger operator involving the singular potential, since we are not able to apply the regularity results or $L^\infty$-elliptic estimates. As we shall see, we demonstrate an extension of the classical approach due to Rabinowitz \cite{Rabinowitz} to prove Theorem \ref{thm:main1}. Finally we recall that solutions to \eqref{eq:singular} with $V_\eps \equiv 0$ have been recently obtained by Badiale et. al. \cite{BadialeBenciRolando} with a different set of growth assumptions imposed on $f$, e.g. supercritical growth at $0$, excluding the Kerr nonlinearity, cf. \cite{GMS}.

In order to study the asymptotic behaviour of $u_\varepsilon$ we introduce the following assumptions.
\begin{enumerate}
\item[(V2)] $\lim_{|x| \to \infty} V(x) =V_\infty<\infty$.
\item[(V3)] $V$ is H\"older continuous at $0$ with some exponent $\alpha>0$. 
\end{enumerate}
Observe that the continuity of $V$ and (V2) imply  that $V \in L^\infty (\R^N)$ and $X_\eps$ does not depend on $\eps$.

\begin{Th}\label{thm:main2}
Suppose that $V\in \cC^{\cG(K)}(\R^N)$, (V1)--(V3), (F1)--(F4) hold and $f$ is odd. Then, there is a sequence $\varepsilon_n \to 0$ such that one of the following holds. Either
\begin{itemize}
\item[(a)] there is a nontrivial weak solution $U$ to \eqref{eq:limiting} with $k =V_\infty$ (i.e. \eqref{eq:singular} with $V_\eps\equiv V_\infty$) that
$$
u_{\varepsilon_n} - U(\cdot - (0,z_n)) \to 0 \quad \hbox{ in } X_1 \hbox{ and in } L^p (\R^N)
$$
for some translations $(z_n) \subset \R^{N-K}$ satisfying $\varepsilon_n |z_n| \to \infty$; 
\end{itemize}
or
\begin{itemize}
\item[(b)] there is $\ell \geq 1$, such that for all $j\in\{1,..,\ell\}$ there exist $(z_n^j) \subset \R^{N-K}$ and nontrivial weak solutions $U_{j}$ to \eqref{eq:limiting} with $k=V(0, z^j) $ for some $z^j \in \R^{N-K}$, such that
$$
u_{\varepsilon_n} - \sum_{j=1}^\ell U_j ( \cdot - (0, z_n^j) ) \to 0 \quad \hbox{ in } X_1 \hbox{ and in } L^p (\R^N);
$$
moreover $z^j = \lim_{n\to\infty} \varepsilon_n z_n^j$ and $\ell \leq \frac{m_{V_\infty}}{m_{V_0}}$, where $m_{V_\infty}$, $m_{V_0}$ are defined in \eqref{eq:m}.
\end{itemize}
\end{Th}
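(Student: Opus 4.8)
The plan is to follow the variational scheme of Rabinowitz \cite{Rabinowitz}, adapted to the singular operator, and to carry out a semiclassical profile decomposition of the mountain pass critical points $u_\eps$ of $\cJ_\eps$ produced in the proof of Theorem \ref{thm:main1}, working entirely inside $X_1$; under (V1)--(V2) the space $X_\eps$ equals $X_1$ with norms uniformly equivalent in $\eps$ (since $0<V_0\le V_\eps\le\|V\|_\infty$), so the only $\eps$-dependence is through the coefficient $V_\eps(x)=V(\eps x)$. First I would control the mountain pass level $c_\eps$ at which $u_\eps$ lies: writing $\cJ_\infty^k$ for the energy functional of \eqref{eq:limiting}, from $\cJ_\eps\ge\cJ_\infty^{V_0}$ pointwise the Nehari characterizations of the two levels give $c_\eps\ge m_{V_0}>0$, while a standard test-function computation — translating a near-ground state of $\cJ_\infty^{V_\infty}$ far out in the $z$-variable, which leaves $\int u^2/|y|^2$ unchanged, and invoking (V2) — gives $c_\eps\le m_{V_\infty}$ for every $\eps>0$. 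With $c_\eps$ bounded, the Nehari-type identity together with (F3)--(F4) and $V_0>0$ shows that $(u_\eps)_\eps$ is bounded in $X_1$.

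The heart of the proof is a global compactness lemma. Fix $\eps_n\to0^+$; since $\cJ_{\eps_n}'(u_{\eps_n})=0$, I would show that, along a subsequence, there are $\ell\ge0$, translations $(z_n^j)_n\subset\R^{N-K}$ with $|z_n^i-z_n^j|\to\infty$ for $i\ne j$, and nontrivial $\cG(K)$-invariant weak solutions $U_j$ of \eqref{eq:limiting} with parameter $k_j=V_\infty$ when $\eps_n|z_n^j|\to\infty$ and $k_j=V(0,z^j)$, $z^j:=\lim_n\eps_n z_n^j$, otherwise, such that
\[
u_{\eps_n}-\sum_{j=1}^\ell U_j(\cdot-(0,z_n^j))\longrightarrow 0\quad\text{in }X_1\text{ and in }L^p(\R^N),
\]
and consequently $c_{\eps_n}=\sum_{j=1}^\ell\cJ_\infty^{k_j}(U_j)+o(1)$. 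The construction is the usual iteration, with two points specific to the present setting. Admissible translations are only in the $z$-variable, because translating in $y$ destroys the weight $|y|^{-2}$; a Lions-type vanishing lemma restricted to $\cG(K)$-invariant functions (here $N>K\ge2$ guarantees enough room in the $\cO(K)$-orbits) shows that a remainder which does not vanish in $L^p$ has a nonzero weak limit after some $z$-translation, and that a concentration with $|y|\to\infty$ cannot occur — by $\cO(K)$-invariance and the $X_1$-bound it would produce arbitrarily many disjoint $L^2$-lumps of fixed mass on a single orbit. A Br\'ezis--Lieb splitting for the nonlinearity (from (F1)--(F2)), linearity of the singular and potential terms, and the convergence $V(\eps_n x+(0,\eps_n z_n^j))\to k_j$ (by continuity of $V$, with (V3) quantifying the rate when $z^j=0$) then identify each translated remainder as an asymptotic Palais--Smale sequence for $\cJ_\infty^{k_j}$, whose weak limit $U_j$ solves \eqref{eq:limiting}; passing to the limit in the weak formulation near $\{y=0\}$ uses $u_n/|y|\weakto U_j/|y|$ in $L^2$ and the local compactness $X_1\hookrightarrow H^1_\loc$. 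The iteration stops after finitely many steps: every $U_j$ satisfies $\cJ_\infty^{k_j}(U_j)\ge m_{k_j}\ge m_{V_0}>0$ (as $k_j\ge V_0$ and $k\mapsto m_k$ is nondecreasing), while the energies add up to at most $\limsup_n c_{\eps_n}\le m_{V_\infty}$; when it stops the remainder tends to $0$ in $L^p$, and then, from $\cJ_{\eps_n}'(v_n)[v_n]=o(1)$ for the remainder $v_n$, also in $X_1$.

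It remains to read off the dichotomy. The decomposition forces $\ell\ge1$, since $c_{\eps_n}\ge m_{V_0}>0$ excludes $\ell=0$, and it gives $\ell\,m_{V_0}\le\sum_{j=1}^\ell\cJ_\infty^{k_j}(U_j)=\lim_n c_{\eps_n}\le m_{V_\infty}$, so $\ell\le m_{V_\infty}/m_{V_0}$. If $k_{j_0}=V_\infty$ for some index (equivalently $\eps_n|z_n^{j_0}|\to\infty$), then $\cJ_\infty^{k_{j_0}}(U_{j_0})\ge m_{V_\infty}\ge\lim_n c_{\eps_n}=\sum_{j=1}^\ell\cJ_\infty^{k_j}(U_j)$ with all summands positive, which forces $\ell=1$; this is alternative (a), with $U:=U_1$ and $z_n:=z_n^1$. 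Otherwise all $\eps_n z_n^j$ are bounded and, after one more subsequence, $z^j=\lim_n\eps_n z_n^j\in\R^{N-K}$ and $k_j=V(0,z^j)$, which is alternative (b). I expect the global compactness lemma of the second paragraph to be the main obstacle: the singular potential $u/|y|^2$ blocks the elliptic and $L^\infty$ estimates that such arguments usually rely on, so the whole compactness analysis — identifying the limiting profiles, passing to the limit near the singular set $\{y=0\}$, and ruling out escape of mass in the $y$-directions — has to be done variationally in $X_1$, leaning on the $\cG(K)$-symmetry and the uniform energy bounds of the first paragraph.
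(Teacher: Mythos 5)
Your proposal is correct and follows the same overall scheme as the paper: bound the levels $c_\varepsilon$ by $m_{V_0}$ and $m_{V_\infty}$, decompose the bounded Palais--Smale sequence into finitely many profiles translated in the $z$-variable (the paper cites \cite[Theorem 4.7]{BMS} for exactly the profile decomposition you sketch), identify each profile as a critical point of $\Phi_{V(0,z^j)}$ or $\Phi_{V_\infty}$ by passing to the limit in the weak formulation with $V(\varepsilon x + (0,\varepsilon z_n^j))\to k_j$, pass from $L^p$ to $X_1$ convergence of the remainder via $\cJ_{\varepsilon_n}'(u_{\varepsilon_n})v_n = o(1)$, and deduce the dichotomy from the energy identity $\sum_j\Phi_{k_j}(U_j)=\lim c_{\varepsilon_n}\le m_{V_\infty}$ together with $\Phi_{k_j}(U_j)\ge m_{k_j}\ge m_{V_0}$. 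The one genuinely different step is your upper bound $c_\varepsilon\le m_{V_\infty}$: you translate a ground state $u_0$ of $\Phi_{V_\infty}$ far out in the $z$-direction and invoke only (V2) and dominated convergence, which gives the pointwise bound $c_\varepsilon\le m_{V_\infty}$ for every $\varepsilon>0$. The paper's Lemma \ref{ineq1} instead keeps $u_0$ fixed and controls $\int(V_\varepsilon-V_\infty)u_0^2$ as $\varepsilon\to 0^+$ near the origin, which forces it to use (V3) and the decay estimate from Theorem \ref{thm:decay} and yields only $\limsup_{\varepsilon\to 0^+}c_\varepsilon\le m_{V_\infty}$. Your route is simpler, is enough for \eqref{c-ineq}, and in fact shows that (V3) is not needed for this particular bound; the paper presumably keeps the fixed-profile argument because it reuses the decay estimate already established for Theorem \ref{thm:main1}, whereas your argument sidesteps it entirely.
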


Using the correspondence between weak solutions to \eqref{eq:curlcurl} and \eqref{eq:singular} (cf. \cite{BB, GMS}) we obtain the following result.

\begin{Th}\label{thm:main3}
Suppose that $N=3$, $K=2$, $V\in \cC^{\cG(2)}(\R^3)$, (V1)--(V3), (F1)--(F4) hold, $g(\alpha w)=f(\alpha)w$ for $\alpha\in\R$, $w\in \R^3$ such that $|w|=1$ (in particular, $f$ is odd). Then, for sufficiently small $\varepsilon$ there are weak solutions $\mathbf{U}_\eps$ to \eqref{eq:curlcurl} of the form \eqref{eq:Uu}; $\U_\eps \in L^\infty (\R^3;\R^3)$ and
$$
\limsup_{|x|\to\infty} |x|^\nu | \U_\eps (x) | = 0 \quad \hbox{for every } \nu <  \frac{N-2+\sqrt{(N-2)^2+4}}{2}.
$$
Moreover, there is a sequence $\varepsilon_n \to 0^+$ such that one of the following holds. Either
\begin{itemize}
\item[(a)] there is a nontrivial weak solution $\U$ to \eqref{eq:curlcurl} with $V_\eps \equiv V_\infty$ such that
$$
\U_{\varepsilon_n} - \U(\cdot - (0,z_n)) \to 0 \quad \hbox{ in } H^1 (\R^3; \R^3)
$$
for some translations $(z_n) \subset \R$ satisfying $\varepsilon_n |z_n| \to \infty$; 
\end{itemize}
or
\begin{itemize}
\item[(b)] there is $\ell \geq 1$, such that for all $j\in\{1,..,\ell\}$ there exist $(z_n^j) \subset \R$ and nontrivial weak solutions $\U_{j}$ to \eqref{eq:curlcurl} with $V_\varepsilon \equiv V(0, z^j) $ for some $z^j \in \R$, such that
$$
\U_{\varepsilon_n} - \sum_{j=1}^\ell \U_j ( \cdot - (0, z_n^j) ) \to 0 \quad \hbox{ in } H^1 (\R^3; \R^3);
$$
moreover $z^j = \lim_{n\to\infty} \varepsilon_n z_n^j$.
\end{itemize}
\end{Th}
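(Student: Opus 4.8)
The plan is to derive Theorem~\ref{thm:main3} from Theorems~\ref{thm:main1} and~\ref{thm:main2} by pushing them through the variational correspondence between the curl-curl problem~\eqref{eq:curlcurl} and the reduced Schr\"odinger equation~\eqref{eq:singular} in the case $N=3$, $K=2$, as set up in \cite{BB,GMS}. Recall that to a $\cG(2)$-invariant $u=u(r,x_3)$ one associates the field $\mathbf{U}$ by~\eqref{eq:Uu}; in cylindrical coordinates $\mathbf{U}=u(r,x_3)\,\mathbf{e}_\theta$, hence $\div\mathbf{U}=0$, and a direct computation gives
\begin{equation*}
\int_{\R^3}|\curl\mathbf{U}|^2\,dx=\int_{\R^3}\Big(|\nabla u|^2+\frac{u^2}{|y|^2}\Big)\,dx ,
\end{equation*}
the cross term $\int_{\R^3}|y|^{-1}\partial_r(u^2)\,dx$ producing no boundary contribution on $\{r=0\}$ or at infinity precisely because $u^2/|y|^2\in L^1$. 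Since $\div\mathbf{U}=0$ we also have $\int|\nabla\mathbf{U}|^2=\int|\curl\mathbf{U}|^2$, and $|\mathbf{U}(x)|=|u(r,x_3)|$ pointwise, so $\|\mathbf{U}\|_{L^2}=\|u\|_{L^2}$; as $V_0\le V_1\le\|V\|_\infty$ by (V1)--(V2), the linear map $u\mapsto\mathbf{U}$ is then an isomorphism of $X_1$ onto the closed subspace of $H^1(\R^3;\R^3)$ of fields of the form~\eqref{eq:Uu}, with equivalent norms. By the principle of symmetric criticality this isomorphism identifies critical points of $\cJ_\eps$ with $\cG(2)$-invariant weak solutions of~\eqref{eq:curlcurl} of that form, and sends~\eqref{eq:limiting} with constant $k$ to~\eqref{eq:curlcurl} with $V_\eps\equiv k$.

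First I would apply Theorem~\ref{thm:main1} with $N=3$, $K=2$: the hypothesis $g(\alpha w)=f(\alpha)w$ forces $f$ odd, so all its assumptions hold, and for each $\eps<\eps_0$ we obtain a nontrivial, nonnegative, $\cG(2)$-invariant $u_\eps\in X_\eps\cap L^\infty(\R^3)$ solving~\eqref{eq:singular} with $\limsup_{|x|\to\infty}|x|^\nu u_\eps(x)=0$ for $\nu<\frac{N-2+\sqrt{(N-2)^2+4}}{2}$. Letting $\mathbf{U}_\eps$ be the field associated with $u_\eps$, the correspondence of the first paragraph makes it a weak solution of~\eqref{eq:curlcurl} of the form~\eqref{eq:Uu}; the pointwise equality $|\mathbf{U}_\eps(x)|=u_\eps(x)$ then gives immediately $\mathbf{U}_\eps\in L^\infty(\R^3;\R^3)$ together with the stated decay rate.

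Next I would invoke Theorem~\ref{thm:main2}, whose hypotheses coincide with those assumed here, to get a sequence $\eps_n\to0^+$ for which (a) or (b) holds for $(u_{\eps_n})$ in $X_1$ and in $L^p(\R^N)$. Since the translations occur only in the $\R^{N-K}$-direction $(0,z_n)$ --- along which the $\mathbf{e}_\theta$-factor in~\eqref{eq:Uu} is constant --- they act on the associated fields as plain translations in $x_3$; applying the isomorphism of the first paragraph to the ($\cG(2)$-invariant) difference $u_{\eps_n}-\sum_j U_j(\cdot-(0,z_n^j))$ therefore converts its convergence to $0$ in $X_1$ into convergence to $0$ in $H^1(\R^3;\R^3)$ of $\mathbf{U}_{\eps_n}-\sum_j\mathbf{U}_j(\cdot-(0,z_n^j))$, where $\mathbf{U}_j$ is the field attached to the profile $U_j$ (and hence a weak solution of~\eqref{eq:curlcurl} with the corresponding constant potential). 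The relations $\eps_n|z_n|\to\infty$ in (a) and $z^j=\lim_n\eps_n z_n^j$ in (b) carry over without change, which establishes the dichotomy.

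The one step that is not purely bookkeeping is the verification --- done as in \cite{BB,GMS} --- that~\eqref{eq:Uu} indeed defines an isomorphism of $X_1$ onto the stated subspace of $H^1(\R^3;\R^3)$, the delicate points being the absence of any boundary term on the axis $\{r=0\}$ (which relies on the Hardy-type bound $u^2/|y|^2\in L^1$) and the preservation of criticality via symmetric criticality and density of admissible test fields; granting this, Theorem~\ref{thm:main3} is a direct translation of Theorems~\ref{thm:main1} and~\ref{thm:main2}.
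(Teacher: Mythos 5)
Your proposal is correct and follows essentially the same route as the paper: the paper's proof of Theorem~\ref{thm:main3} is the one-liner ``follows from Theorem~\ref{thm:main2} and Corollary~\ref{cor:decay}'', and your argument simply unpacks what that invocation means — existence from Theorem~\ref{thm:main1}, asymptotics from Theorem~\ref{thm:main2}, and the transfer to fields via the isometric correspondence $u\mapsto\mathbf U$ of~\eqref{eq:Uu} (whose technical details the paper delegates to Corollary~\ref{cor:decay} and~\cite{GMS,BB}). The extra computation you supply, namely $\int|\curl\mathbf U|^2=\int|\nabla u|^2+u^2/|y|^2$ with vanishing boundary term on $\{r=0\}$, and the observation that $\div\mathbf U=0$ upgrades $\curl$-control to full $H^1$-control, is exactly what makes the transfer of the $X_1$-decomposition into an $H^1(\R^3;\R^3)$-decomposition legitimate, so no gap remains.
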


\section{Functional setting}

We consider the group action of $\cG(K)$ on $H^1 (\R^N)$. Then, by $H^1_{\cG(K)} (\R^N)$ we denote a subspace of $\cG(K)$-invariant functions from $H^1(\R^N)$. In Sections 2--5 we always assume that $V\in\cC^{\cG(K)}(\R^N)$, $N>K\geq 2$.

Let 
$$
X_\varepsilon^{\cG(K)} := X_\varepsilon \cap H^1_{\cG(K)}(\R^N).
$$
The norm in $X_\eps$ and in  $X_\varepsilon^{\cG(K)}$ is given by
\[
\|u\|^2_\eps:=\int_{\R^N}|\nabla u|^2+\frac{u^2}{|y|^2}+V_\eps(x)u^2\,dx.
\]
Note that, under (V1),
$$
\| u\|^2_\eps \geq \int_{\R^N} |\nabla u|^2 + V_\eps(x) u^2 \, dx \geq \int_{\R^N} |\nabla u|^2 + V_0 u^2 \, dx
$$
and therefore embeddings
$$
X_\varepsilon^{\cG(K)} \subset H^1_{\cG(K)} (\R^N) \subset L^s (\R^N)
$$
are continuous, where $2 \leq s \leq 2^*$.

For every $\varepsilon > 0$, the functional $\cJ_\varepsilon : X_\eps \rightarrow \R$ associated with \eqref{eq:singular} is, under (F1) and (V1), of $\cC^1$-class and its critical points are weak solutions to \eqref{eq:singular}. Note that, thanks to the Palais' principle of symmetric criticality (see \cite{Palais}), every critical point of $\cJ_\eps$ restricted to $X_\varepsilon^{\cG(K)}$ is also a critical point of the free functional, and therefore, a weak solution to \eqref{eq:singular}. We will work on the following Nehari manifold
\[\cN_\varepsilon = \left\{ u\in X_\varepsilon^{\cG(K)} \setminus \{0\}: \int_{\R^N} |\nabla u|^2 + \frac{u^2}{|y|^2} + V_\varepsilon(x) u^2 \, dx =\int_{\R^N} f(u)u\,dx \right\},\]
and we define
$$
c_\varepsilon := \inf_{\cN_\varepsilon} \cJ_\varepsilon.
$$
Observe that, if $V \in L^\infty (\R^N)$, then $X_\varepsilon$ does not depend on $\varepsilon$ and $X_\varepsilon = Y$, where
$$
Y := \left\{ u \in H^1 (\R^N) \ : \ \int_{\R^N} \frac{u^2}{|y|^2} \, dx < \infty \right\}.
$$
We define $Y^{\cG(K)} := Y \cap H^1_{\cG(K)}(\R^N)$. In $Y$ we consider the norm
$$
\|u\|^2_Y := \int_{\R^N} |\nabla u|^2 + \frac{u^2}{|y|^2} + u^2 \, dx, \quad u \in Y.
$$
It is natural to consider the limiting problem of the form
\begin{equation}\label{eq:limiting}
-\Delta u +\frac{u}{|y|^2}+ k u=f(u)\quad
\hbox{for } x = (y,z) \in \R^N = \R^{K} \times \R^{N-K},
\end{equation}
where $k > 0$, and the corresponding energy functional $\Phi_k : Y \rightarrow \R$
\[
\Phi_k (u):=\frac12\int_{\R^N}|\nabla u|^2+\frac{u^2}{|y|^2}+k u^2 \,dx-\int_{\R^N}F(u)\, dx.
\]
Again, thanks to the Palais' principle of symmetric criticality, critical points of $\Phi_k$ restricted to $Y^{\cG(K)}$ are also critical points of the free functional. We set also
$$
\cM_k := \left\{ u \in Y^{\cG(K)} \setminus \{0\} \ : \ \int_{\R^N} |\nabla u|^2 + \frac{u^2}{|y|^2} + k u^2 \, dx = \int_{\R^N} f(u)u \, dx \right\}
$$
and
\begin{equation}\label{eq:m}
m_k := \inf_{\cM_k} \Phi_k.
\end{equation}

\section{Continuous dependence of Nehari manifold levels}

We start our analysis with the problem \eqref{eq:singular} with $\varepsilon = 1$. Hence, in this section, we will write for simplicity $X^{\cG(K)} := X_1^{\cG(K)}$, $\cJ := \cJ_1$, $\cN := \cN_{1}$, $c := c_1$. It is classical to check the following fact (cf. \cite{SzulkinWeth}).
\begin{Lem}
For every $u \in X^{\cG(K)} \setminus \{ 0 \}$ there exists unique $t_{V} (u)>0$ such that $t_V(u)u\in\cN$, 
\begin{equation}\label{maxonN}
    \cJ(t_V(u)u)=\max_{t \geq 0}\cJ(t u),
\end{equation}
$\cN$ is bounded away from zero, and $\widehat{m}_V : \cS \rightarrow \cN$ given by $\widehat{m}_V(u) := t_V(u) u$ is a homeomorphism, where $\cS$ is the unit sphere in $X^{\cG(K)}$.
\end{Lem}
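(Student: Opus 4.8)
The plan is to run the classical fibering argument for the Nehari manifold in the spirit of Szulkin--Weth \cite{SzulkinWeth}, checking that the singular Hardy-type term $u^2/|y|^2$ causes no trouble: it is absorbed into the norm $\|\cdot\|_1$, and the continuous embeddings $X^{\cG(K)}\hookrightarrow L^s(\R^N)$, $2\le s\le 2^*$, recorded in Section~2 are available. Fix $u\in X^{\cG(K)}\setminus\{0\}$ and study the fibering map $\varphi_u(t):=\cJ(tu)=\tfrac12 t^2\|u\|_1^2-\int_{\R^N}F(tu)\,dx$ on $[0,\infty)$. First I would show $\varphi_u>0$ on a right neighbourhood of $0$: by (F1) and (F2), for each $\delta>0$ there is $C_\delta$ with $|F(s)|\le\delta s^2+C_\delta|s|^p$, so by the Sobolev embeddings $\int_{\R^N}F(tu)\,dx\le \delta C t^2\|u\|_1^2+C_\delta C' t^p\|u\|_1^p$ and hence $\varphi_u(t)\ge(\tfrac12-\delta C)t^2\|u\|_1^2-C_\delta C' t^p\|u\|_1^p>0$ for small $t>0$ once $\delta$ is chosen small. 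Next, using (F3) together with Fatou's lemma (and the lower bound $F(s)\ge -c_0 s^2$, valid since $F(s)/s^2$ is bounded below), I would deduce $t^{-2}\int_{\R^N}F(tu)\,dx\to+\infty$, so $\varphi_u(t)\to-\infty$ as $t\to+\infty$. Consequently $\varphi_u$ attains a global maximum at some $t_V(u)>0$, and there $\varphi_u'(t_V(u))=0$, which is exactly the condition $t_V(u)u\in\cN$.

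For uniqueness I would observe that $\varphi_u'(t)=0$ for $t>0$ is equivalent, after dividing by $t$, to $\|u\|_1^2=\int_{\R^N}\frac{f(tu)}{tu}\,u^2\,dx$, with the integrand set to $0$ where $u=0$. By (F2) the ratio $f(s)/s$ extends continuously by $0$ at $s=0$, and (F4), applied separately on $\{u>0\}$ and $\{u<0\}$, shows that $t\mapsto f(tu(x))/(tu(x))$ is strictly increasing on $(0,\infty)$ for a.e.\ $x$ with $u(x)\ne 0$; since $\{u\ne0\}$ has positive measure, the right-hand side is strictly increasing in $t$, so the equation has at most one solution. This pins down $t_V(u)$ uniquely, shows that $tu\in\cN\iff t=t_V(u)$, and, combined with the previous paragraph, gives \eqref{maxonN}. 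The same estimate used near $0$ also yields that $\cN$ is bounded away from $0$: if $u\in\cN$ then $\|u\|_1^2=\int_{\R^N}f(u)u\,dx\le\delta C\|u\|_1^2+C_\delta C'\|u\|_1^p$, and with $\delta$ small this forces $\|u\|_1\ge\rho$ for a universal $\rho>0$.

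Finally, for the homeomorphism claim, I would note that the inverse of $\widehat{m}_V$ on $\cN$ is the radial projection $v\mapsto v/\|v\|_1$, which is continuous, so it remains to prove that $u\mapsto t_V(u)$ is continuous on $\cS$. Given $u_n\to u$ in $\cS$, one has $t_V(u_n)\ge\rho$ (since $t_V(u_n)u_n\in\cN$ and $\|u_n\|_1=1$), and $(t_V(u_n))$ is bounded: if $t_V(u_n)\to\infty$ along a subsequence, then repeating the Fatou argument but now with the a.e.\ and $L^s$ convergence of $u_n$ gives $\cJ(t_V(u_n)u_n)\to-\infty$, contradicting $\cJ(t_V(u_n)u_n)=\max_{t\ge0}\cJ(tu_n)\ge\cJ(su_n)\to\cJ(su)$ for any fixed $s>0$. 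Then any subsequential limit $t_0$ of $t_V(u_n)$ is positive and satisfies $t_0u\in\cN$ (the Nehari condition depends continuously on the ray), hence $t_0=t_V(u)$ by uniqueness; as the limit is independent of the subsequence, $t_V(u_n)\to t_V(u)$.

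The argument is essentially routine; the only places needing care are (i) confirming that the singular term does not spoil the $\cC^1$-regularity of $\cJ$ and the embeddings (already granted in Section~2), and (ii) the passage to the limit $t\to\infty$, where one must first secure the lower bound $F(s)\ge -c_0 s^2$ before invoking Fatou. I expect the boundedness of $(t_V(u_n))$ in the continuity step to be the most technical point.
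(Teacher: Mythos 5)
Your proof is correct, and it fills in the details of the standard Szulkin--Weth fibering argument that the paper itself only cites (``It is classical to check\ldots (cf.\ \cite{SzulkinWeth})'') without reproducing. One small simplification you could make: under (F2) and (F4), $f(s)s\ge 0$ for all $s$, so in fact $F\ge 0$; this is a bit cleaner than the lower bound $F(s)\ge -c_0 s^2$ you invoke before Fatou, and it also immediately justifies applying Fatou in the continuity step (after passing to a further subsequence so that $u_n\to u$ a.e., which you should make explicit).
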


\begin{Lem}\label{lem:c-tilde}
    Suppose that $V$, $\widetilde{V} \in L^\infty (\R^N)$ satisfy (V1). If $V\geq\widetilde{V}$ then $c\geq\widetilde{c}$, where $\widetilde{c} := \inf_{\widetilde{\cN}} \widetilde{\cJ}$, $\widetilde{\cJ}$ is the energy functional with $V$ replaced by $\widetilde{V}$ and $\widetilde{\cN}$ is the corresponding Nehari manifold in $Y^{\cG(K)}$.
\end{Lem}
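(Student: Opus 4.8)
The plan is to use the fiber-maximization property recorded in the previous lemma (equation \eqref{maxonN}) together with the elementary monotone dependence of the functional on the potential. Since $V,\widetilde V\in L^\infty(\R^N)$ both satisfy (V1), the space $X^{\cG(K)}$ associated with $V$ and the one associated with $\widetilde V$ both coincide with $Y^{\cG(K)}$, as observed right after the limiting problem is introduced; hence $\cJ$ and $\widetilde\cJ$ are defined on one and the same Banach space and the previous lemma applies to each of them. In particular, for every $u\in Y^{\cG(K)}\setminus\{0\}$ there are unique numbers $t_V(u),t_{\widetilde V}(u)>0$ with $t_V(u)u\in\cN$ and $t_{\widetilde V}(u)u\in\widetilde\cN$, characterized by \eqref{maxonN} and its analogue for $\widetilde\cJ$.

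First I would fix an arbitrary $u\in\cN$. By the uniqueness in the previous lemma, $t_V(u)=1$, so $\cJ(u)=\max_{t\ge 0}\cJ(tu)$. Next I would compare $\widetilde\cJ$ and $\cJ$ along the half-line $t\mapsto tu$, $t\ge 0$: since $\widetilde V\le V$ pointwise and $u^2\ge 0$, we have $\int_{\R^N}\widetilde V u^2\,dx\le\int_{\R^N}V u^2\,dx$, and as the two functionals differ only in this term,
$$
\widetilde\cJ(tu)=\frac{t^2}{2}\int_{\R^N}|\nabla u|^2+\frac{u^2}{|y|^2}+\widetilde V(x) u^2\,dx-\int_{\R^N}F(tu)\,dx\le\cJ(tu)\qquad\text{for all }t\ge 0.
$$
Consequently, using $t_{\widetilde V}(u)u\in\widetilde\cN$ and the characterization of $t_{\widetilde V}(u)$ as a maximizer along the ray,
$$
\widetilde c\le\widetilde\cJ\big(t_{\widetilde V}(u)u\big)=\max_{t\ge 0}\widetilde\cJ(tu)\le\max_{t\ge 0}\cJ(tu)=\cJ(u).
$$
Since $u\in\cN$ was arbitrary, passing to the infimum over $\cN$ gives $\widetilde c\le c$, which is the claim.

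I do not expect any genuine obstacle here; this is a soft comparison argument. The only two points deserving a word of care are that $\cJ$ and $\widetilde\cJ$ must be regarded on a common domain, which is exactly what $V,\widetilde V\in L^\infty$ together with (V1) guarantee, so that the fiber-maximization lemma is legitimately applicable to both potentials, and the trivial inequality $\int_{\R^N}\widetilde V u^2\,dx\le\int_{\R^N}V u^2\,dx$. No further use of (F3)--(F4) is needed beyond what is already absorbed into the existence and uniqueness of the Nehari projection and its characterization as a maximum along rays.
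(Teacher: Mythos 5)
Your proof is correct and uses essentially the same idea as the paper: compare fiber maxima of $\cJ$ and $\widetilde{\cJ}$ using the pointwise inequality $\widetilde{\cJ}(tu)\leq\cJ(tu)$ together with the Nehari characterization \eqref{maxonN}. The paper runs the chain in the opposite direction (fixing $u\in\widetilde{\cN}$ and observing that $u\mapsto t_V(u)u$ is a bijection onto $\cN$), whereas your version, fixing $u\in\cN$ and taking the infimum directly, is marginally more streamlined because it dispenses with the bijection observation.
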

\begin{proof}
    Note that for all $u\in \widetilde{\cN}$
    $$
    \widetilde{c} = \inf_{\widetilde{\cN}} \widetilde{\cJ} \leq \widetilde{\cJ} (u) \leq \cJ(u) \leq \cJ(t_V(u) u).
    $$
    Observe that $\widetilde{\cN} \ni u \mapsto \eta(u) := t_V(u)u \in \cN$ is a bijection, since $\eta(u) = \widehat{m}_V \circ \widehat{m}_{\widetilde{V}}^{-1}$. Hence
    $$
    \widetilde{c} \leq \cJ(v) \quad \mbox{ for any } v \in \cN.
    $$
    Thus $\widetilde{c} \leq c$ and the proof is completed.
\end{proof}

We will show the following continuous dependence of $c$ with respect to the potential $V$.

\begin{Th}\label{thm:cont_dep}
    Suppose that $V \in L^\infty (\R^N)$ and $(V_n) \subset L^\infty (\R^N)$ satisfy (V1). Then $c$ depends continuously on $V$ in $L^\infty$, i.e. if $V_n \to V$ in $L^\infty(\R^N)$ then $c(V_n)\to c(V)$, where $c(V)$ denotes the infimum on the corresponding Nehari manifold in $Y^{\cG(K)}$ of the energy functional with the potential $V$.
\end{Th}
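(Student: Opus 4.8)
The plan is to establish the two one‑sided inequalities $\limsup_{n}c(V_n)\le c(V)$ and $\liminf_{n}c(V_n)\ge c(V)$. Set $\delta_n:=\|V_n-V\|_\infty\to0$. Since $\inf V=V_0>0$, we have $\inf V_n\ge V_0/2$ for $n$ large, so the norms $\|\cdot\|_{V_n}$ are uniformly equivalent to $\|\cdot\|_Y$ and the constructions of this section (the fibering lemma giving, for each potential $W$ satisfying (V1), the unique $t_W(u)>0$ with $t_W(u)u\in\cN(W)$ and $\cJ_W(t_W(u)u)=\max_{t\ge0}\cJ_W(tu)$, the fact that $\cN(W)$ is bounded away from $0$, and the homeomorphism $\widehat m_W$) apply uniformly in $n$. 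The elementary identity used repeatedly is $\cJ_{V_n}(tu)=\cJ_V(tu)+\tfrac{t^2}{2}\int_{\R^N}(V_n-V)u^2\,dx$, so that $|\cJ_{V_n}(tu)-\cJ_V(tu)|\le\tfrac{t^2}{2}\delta_n\|u\|_{L^2}^2$, and likewise for $\cJ_{V_n}'(tu)[tu]$.

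For the upper bound, fix $\eta>0$ and $u\in\cN(V)$ with $\cJ_V(u)<c(V)+\eta$. Projecting $u$ onto $\cN(V_n)$ along its fibre gives $c(V_n)\le\cJ_{V_n}(t_{V_n}(u)u)=\max_{t\ge0}\cJ_{V_n}(tu)\le\max_{t\ge0}\bigl(\cJ_V(tu)+\tfrac{t^2}{2}\delta_n\|u\|_{L^2}^2\bigr)$. By (F3) the function $t\mapsto\cJ_V(tu)+\tfrac{t^2}{2}\|u\|_{L^2}^2$ tends to $-\infty$, so once $\delta_n\le1$ the maximum above is attained on an interval $[0,T_0]$ depending only on $u$, on which the perturbation term is at most $\tfrac{T_0^2}{2}\delta_n\|u\|_{L^2}^2\to0$. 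Hence $c(V_n)\le\max_{0\le t\le T_0}\cJ_V(tu)+o(1)=\cJ_V(u)+o(1)<c(V)+\eta+o(1)$; letting $n\to\infty$ and then $\eta\to0$ yields $\limsup_n c(V_n)\le c(V)$.

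For the lower bound, pass to a subsequence (still denoted $n$) with $c(V_n)\to\liminf_n c(V_n)=:c^*\in[0,c(V)]$, and pick $u_n\in\cN(V_n)$ with $\cJ_{V_n}(u_n)\le c(V_n)+1/n$; then $\cJ_{V_n}(u_n)\to c^*$ and, via the Nehari identity $\cJ_{V_n}(u_n)=\int(\tfrac12 f(u_n)u_n-F(u_n))$ together with (F4), these energies are bounded. The main step is the uniform bound $\sup_n\|u_n\|_Y<\infty$. This is the standard Nehari argument, but its usual completion relies on compactness of $H^1_{\cG(K)}(\R^N)\hookrightarrow L^p$, which fails here because of translations along the $\R^{N-K}$ factor, so one invokes Lions' dichotomy. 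Supposing $\|u_n\|_{V_n}\to\infty$, put $w_n:=u_n/\|u_n\|_{V_n}$, bounded in $H^1$, and note $\cJ_{V_n}(Rw_n)\le\max_{t\ge0}\cJ_{V_n}(tw_n)=\cJ_{V_n}(u_n)$ is bounded for each fixed $R$. If $(w_n)$ vanishes, then $w_n\to0$ in $L^p$ and $\cJ_{V_n}(Rw_n)\ge\tfrac{R^2}{2}-\eps R^2\|w_n\|_{L^2}^2-C_\eps R^p\|w_n\|_{L^p}^p$ forces $\tfrac{R^2}{4}\lesssim1$ for all large $R$, impossible. If $(w_n)$ does not vanish, a translation by some $(0,\zeta_n)\in\R^{N-K}$ — which preserves $\cG(K)$‑invariance, the $\R^K$‑components of the concentration points staying bounded by $\cO(K)$‑invariance and the $L^2$‑bound — yields a nonzero weak limit $\widetilde w$; then $|u_n(\cdot+(0,\zeta_n))|\to\infty$ a.e.\ on $\{\widetilde w\ne0\}$, so by (F3) and Fatou $\int F(u_n)/\|u_n\|_{V_n}^2\to+\infty$, whence $\cJ_{V_n}(u_n)\to-\infty$, again impossible. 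Thus $(u_n)$ is bounded.

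Finally, set $\widetilde u_n:=t_V(u_n)u_n\in\cN(V)$, so $c(V)\le\cJ_V(\widetilde u_n)=\max_{t\ge0}\cJ_V(tu_n)$. The multipliers $t_V(u_n)$ are bounded: from $u_n\in\cN(V_n)$, (F1)–(F2) and $\cN(V_n)$ bounded away from $0$ give $\|u_n\|_{L^p}^p\gtrsim\|u_n\|_{V_n}^2\gtrsim1$, so $(u_n)$ does not vanish; and if $t_V(u_n)\to\infty$ along a subsequence, the Lions/Fatou argument above applied to $t_V(u_n)u_n$ gives $\cJ_V(t_V(u_n)u_n)\to-\infty$, contradicting $\cJ_V(t_V(u_n)u_n)\ge c(V)>0$. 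Writing $T:=\sup_n t_V(u_n)<\infty$ and using boundedness of $\|u_n\|_{L^2}$,
\[
c(V)\le\max_{0\le t\le T}\cJ_V(tu_n)\le\max_{0\le t\le T}\Bigl(\cJ_{V_n}(tu_n)+\tfrac{t^2}{2}\delta_n\|u_n\|_{L^2}^2\Bigr)\le\cJ_{V_n}(u_n)+\tfrac{T^2}{2}\delta_n\|u_n\|_{L^2}^2\longrightarrow c^*,
\]
so $c(V)\le c^*=\liminf_n c(V_n)$, which with the upper bound gives $c(V_n)\to c(V)$. One could alternatively use Lemma~\ref{lem:c-tilde} to sandwich $c(V-\delta_n)\le c(V_n)\le c(V+\delta_n)$ for $n$ large, reducing matters to the behaviour of $\delta\mapsto c(V\pm\delta)$ as $\delta\to0^+$; monotonicity then disposes of one side of each, but the remaining sides meet exactly the same non‑compactness obstacle. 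As indicated, the crux is the uniform $H^1$‑boundedness of near‑minimizers without a compact Sobolev embedding, handled by the concentration–compactness dichotomy together with the superquadraticity (F3).
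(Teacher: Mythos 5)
Your proposal is correct and establishes the same continuity statement, but it is organized differently from the paper. The paper first sandwiches $c(V-\delta)\le c(V_n)\le c(V+\delta)$ via Lemma~\ref{lem:c-tilde}, reducing the claim to $c(V+h)\to c(V)$ as $h\to0$; then, for $h<0$, monotonicity gives the existence of $\underline c:=\lim_{h\to0^-}c(V+h)\le c(V)$, and a contradiction argument shows $\underline c=c(V)$. The contradiction is produced by taking a \emph{bounded} minimizing sequence $(u_n)\subset\cN_h$ supplied directly by \cite[Theorem~2.1]{BBJM}, projecting it to $\cN(V)$ via the fibering map $t_V$, and bounding $t_V(u_n)$ through (F3), (F4), the non-vanishing Lions-type lemma \cite[Corollary~3.32]{JM}, and Fatou. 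You instead prove the two one-sided inequalities $\limsup_n c(V_n)\le c(V)$ and $\liminf_n c(V_n)\ge c(V)$ directly. Your upper bound, projecting a fixed near-minimizer $u\in\cN(V)$ onto $\cN(V_n)$ and noting the perturbation $\tfrac{t^2}{2}\delta_n|u|_2^2$ vanishes on a fixed bounded $t$-interval, is a clean argument not present in the paper. Your lower bound is in substance the same as the paper's core step (project near-minimizers $u_n\in\cN(V_n)$ to $\cN(V)$, bound the multiplier $t_V(u_n)$), except that you derive the boundedness of $(u_n)$ from scratch by the standard Nehari/concentration-compactness argument instead of citing \cite{BBJM}. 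Both routes hinge on exactly the same technical lemma — boundedness of the fibering multipliers via (F3) and Lions' dichotomy in $Y^{\cG(K)}$ — so neither is easier at the crux; however, your symmetric two-inequality structure avoids the slight awkwardness in the paper's closing step ``choose $h$ small enough,'' which implicitly requires the bound on $t_V(u_n)$ and $|u_n|_2$ to be uniform in $h$, a uniformity the paper does not explicitly justify. You correctly identify the sandwich alternative at the end and correctly note that it leads back to the same obstacle.
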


\begin{proof}
Fix $\delta > 0$. Observe that for $n \gg 1$
\[
V+\delta \geq V+|V_n-V|\geq V_n\geq V-|V_n-V|\geq V-\delta,
\]
so having in mind Lemma \ref{lem:c-tilde}, it suffices to prove that
\[
c(V+h)\to c(V), \quad h \in \R, \ h \to 0.
\]
We will verify it first for $h < 0$ and $h \to 0^-$.
From Lemma \ref{lem:c-tilde} 
\[
\lim_{h\to0^-} c(V+h) = \underline{c} \leq  c(V).
\]
Suppose that
\begin{equation}\label{nierc}
\underline{c}<c(V).
\end{equation}
Define 
\[
\cI_h(u):=\frac12\int_{\R^N}|\nabla u|^2+\frac{u^2}{|y|^2}+\left(V(x)+h\right)u^2 \,dx-\int_{\R^N}F(u)\, dx = \cJ(u) + \frac12 h |u|_2^2, \quad u \in Y^{\cG(K)}.
\]
Here and in what follows $|\cdot|_q$ stands for the Lebesgue $L^q$-norm for $q\geq 1$.

From \cite[Theorem 2.1]{BBJM}, there is a bounded sequence $(u_n)\subset\cN_h$ such that  $\cI_h(u_n)\to c(V+h)$,
where $\cN_{h}$ is the Nehari manifold in $Y^{\cG(K)}$ corresponding to $\cI_h$.
Then 
\begin{align*}
c(V)&\leq \cJ(t_V(u_n)u_n)=\cI_h(t_V(u_n)u_n)-\frac12h t_V(u_n)^2 |u_n|_2^2\leq \cI_h(u_n)-\frac12h t_V(u_n)^2 |u_n|_2^2.
\end{align*}
Since $(u_n)$ is bounded in $Y^{\cG(K)}$, $|u_n|_2 \lesssim 1$. We will show that $t_n := t_V(u_n)$ is bounded. Suppose by contradiction that $t_n\to\infty$. Since $(u_n) \subset \cN_{h}$ we have $\liminf_{n \to \infty} |u_n|_p > 0$. Hence \cite[Corollary 3.32]{JM} implies that there is a sequence $(z_n)\subset \R^{N-K}$, $\beta >0$ and $R>0$ such that
\begin{equation*}
\liminf_{n\to\infty}\int_{B((0,z_n), R)}u_n^2\,dx>\beta,
\end{equation*}
and $u_n(\cdot - (0,z_n)) \weakto u \neq 0$. Observe that, thanks to (F3) and (F4), $t_n$ satisfies
\begin{align*}
1 \gtrsim \int_{\R^N} |\nabla u_n|^2 + \frac{u_n^2}{|y|^2} + V(x) u_n^2 \, dx &= \int_{\R^N} \frac{f(t_n u_n) u_n}{t_n} \, dx \geq 2 \int_{\R^N} \frac{F(t_n u_n)}{t_n^2} \, dx \\
&= 2 \int_{\R^N} \frac{F(t_n u_n(\cdot - (0,z_n)))}{t_n^2 |u_n(\cdot - (0,z_n))|^2} |u_n(\cdot - (0,z_n))|^2 \, dx \to \infty,
\end{align*}
which is a contradiction. Hence we can choose $h$ small enough to get contradiction with \eqref{nierc}. The reasoning for $h > 0$ is similar. Therefore $\lim_{h\to 0} c(V+h) = c(V)$ and the proof is completed.
\end{proof}

\section{The limiting problem}

In this section we are interested in the limiting problem \eqref{eq:limiting} and its connection to the problem with an external potential $V$. In what follows, $c := c_1$, $\cJ := \cJ_1$ and $\cN := \cN_1$.

We start with noting the following existence result, which can be obtain using standard techniques; namely using the Nehari manifold method connected with the concentration-compactness argument in the spirit of \cite[Corollary 3.2, Remark 3.2]{JM}, cf. \cite[Corollary 7.1]{BB}.

\begin{Th}\label{thm:limit}
Let $k>0$ and (F1)--(F4) hold. Then $m_k$ is a critical value of $\Phi_k$ with a corresponding weak solution $u_k$ of the problem \eqref{eq:limiting}. Moreover, if $f$ is odd, $u_k \geq 0$.
\end{Th}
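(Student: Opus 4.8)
The plan is to obtain $u_k$ as a minimizer of $\Phi_k$ on the Nehari manifold $\cM_k$ via the concentration–compactness method, following the scheme of \cite[Corollary 3.2, Remark 3.2]{JM} and \cite[Corollary 7.1]{BB}. First I would record that, under (F1)–(F4), the functional $\Phi_k$ has the mountain-pass/Nehari geometry on $Y^{\cG(K)}$: by (F1)–(F2) the origin is a strict local minimum, by (F3) $\Phi_k(tu)\to-\infty$ along rays, and by (F4) the map $t\mapsto\Phi_k(tu)$ has a unique positive maximum, so (exactly as in the Lemma preceding Lemma~\ref{lem:c-tilde}) each nonzero $u$ has a unique $t_k(u)>0$ with $t_k(u)u\in\cM_k$, $\cM_k$ is bounded away from $0$, and $m_k=\inf_{\cM_k}\Phi_k>0$ equals the mountain-pass level.

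Next I would take a minimizing sequence $(u_n)\subset\cM_k$. Using (F3)–(F4) one shows $(u_n)$ is bounded in $Y^{\cG(K)}$ in the standard way (the Nehari constraint plus $\tfrac12 f(u)u-F(u)\ge 0$ controls $\Phi_k(u_n)$ from below by a positive multiple of $\|u_n\|_Y^2$ up to lower-order terms; if $\|u_n\|_Y\to\infty$, rescaling and applying a vanishing/non-vanishing dichotomy contradicts (F3), as in the boundedness argument already used in the proof of Theorem~\ref{thm:cont_dep}). Then a $\cG(K)$-invariant version of Lions' lemma applies: vanishing is excluded because $\liminf|u_n|_p>0$ on $\cM_k$, so there are $(z_n)\subset\R^{N-K}$ with $u_n(\cdot-(0,z_n))\weakto u\neq 0$ weakly in $Y^{\cG(K)}$. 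Here the cylindrical symmetry is essential — translations are only along the $\R^{N-K}$ factor, which leaves the singular term $u^2/|y|^2$ invariant — and the $\cG(K)$-invariance upgrades the embedding $Y^{\cG(K)}\hookrightarrow L^p(\R^N)$ to the compactness needed for $|u_n(\cdot-(0,z_n))-u|_p\to 0$ after passing to a subsequence.

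The main obstacle is the passage to the limit and the verification that the weak limit $u$ is actually a minimizer attaining $m_k$, since the singular potential $u^2/|y|^2$ precludes the usual elliptic-regularity shortcuts. The route is: the translated sequence $\tilde u_n:=u_n(\cdot-(0,z_n))$ is still a minimizing sequence for the (translation-invariant) functional $\Phi_k$ on $\cM_k$; by Ekeland's variational principle one may assume it is also a Palais–Smale sequence, so $\Phi_k'(\tilde u_n)\to 0$; then $u$ is a weak solution of \eqref{eq:limiting}, in particular $u\in\cM_k$, hence $\Phi_k(u)\ge m_k$. For the reverse inequality I would invoke Fatou's lemma together with the fact that, along a PS sequence, $\Phi_k(\tilde u_n)=\Phi_k(\tilde u_n)-\tfrac12\Phi_k'(\tilde u_n)\tilde u_n+o(1)=\int_{\R^N}\bigl(\tfrac12 f(\tilde u_n)\tilde u_n-F(\tilde u_n)\bigr)\,dx+o(1)$, and the integrand is nonnegative and convex-type in $u$ by (F4), so $\liminf_n\Phi_k(\tilde u_n)\ge\int_{\R^N}\bigl(\tfrac12 f(u)u-F(u)\bigr)\,dx=\Phi_k(u)$; thus $\Phi_k(u)=m_k$ and $u_k:=u$ is the desired critical point.

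Finally, when $f$ is odd one obtains $u_k\ge 0$ by the standard trick of replacing the minimizing sequence with $(|u_n|)$: oddness of $f$ gives $F(|u_n|)=F(u_n)$ and $|\nabla|u_n||=|\nabla u_n|$ a.e., so $|u_n|$ stays on $\cM_k$ with the same energy, and the above argument then produces a nonnegative minimizer, which by the weak equation (and the maximum principle away from $\{y=0\}$, or simply by testing against $u_k^-$) is genuinely $\ge 0$. This completes the proof.
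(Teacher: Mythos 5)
Your proposal follows essentially the route the paper points to: the paper gives no proof of Theorem \ref{thm:limit} and simply invokes the Nehari manifold method combined with a concentration--compactness argument in the spirit of \cite[Corollary 3.2, Remark 3.2]{JM} (cf.\ \cite[Corollary 7.1]{BB}), which is exactly what you carry out, including the standard replacement of the minimizing sequence by $(|u_n|)$ to obtain $u_k\geq 0$ when $f$ is odd. The only point worth flagging is that $f$ is merely continuous, so $\cM_k$ need not be a $C^1$ manifold; to justify the Ekeland/Palais--Smale step you should either work with the $C^1$ reduced functional on the unit sphere of $Y^{\cG(K)}$ via the homeomorphism $u\mapsto t_k(u)u$ (as in Szulkin--Weth) or cite \cite[Theorem~2.1]{BBJM} directly for a bounded Palais--Smale sequence at level $m_k$, as the paper does in the proof of Theorem~\ref{thm:cont_dep}.
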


Then we have the following relation.

\begin{Th}\label{nierownosc}
If (V1) and (F1)--(F4) hold, then either $c$ is critical value of $\cJ$ or $c\geq m_{V_\infty}$.
\end{Th}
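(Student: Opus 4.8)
The plan is to run a concentration–compactness dichotomy on a minimizing (Palais–Smale) sequence for $\cJ$ on $\cN$, and show that if compactness fails then mass escapes to infinity and the energy is bounded below by $m_{V_\infty}$. First I would invoke \cite[Theorem 2.1]{BBJM} (as in the proof of Theorem \ref{thm:cont_dep}) to produce a bounded Palais–Smale sequence $(u_n)\subset\cN$ with $\cJ(u_n)\to c$ and $\cJ'(u_n)\to 0$ in $(X^{\cG(K)})^*$; boundedness follows from (F3)–(F4) exactly as in the ``$t_n\to\infty$'' argument already used above, so $|u_n|_2,\|u_n\|_1\lesssim 1$. Passing to a subsequence, $u_n\weakto u$ in $X^{\cG(K)}$. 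If $u\neq 0$, then standard arguments (weak lower semicontinuity of the quadratic part, Fatou for the term $\int f(u_n)u_n$ via (F4), and the fact that $u$ is a weak solution so $u\in\cN$) give $\cJ(u)\le\liminf\cJ(u_n)=c$, hence $\cJ(u)=c$ and $c$ is a critical value; here one uses that the map $t\mapsto\cJ(tu)$ has its maximum at $t=1$ on $\cN$ to rule out $\cJ(u)<c$.

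So assume $u=0$. Since $(u_n)\subset\cN$, (F2) gives $\liminf|u_n|_p>0$, and by the $\cG(K)$-invariant vanishing lemma \cite[Corollary 3.32]{JM} (equivalently \cite[Corollary 3.2]{JM}) there are translations $(z_n)\subset\R^{N-K}$, purely in the $z$-variable because of the cylindrical symmetry and the singular weight $|y|^{-2}$ which forbids drifting in $y$, and $\beta,R>0$ with $\liminf_n\int_{B((0,z_n),R)}u_n^2\,dx>\beta$. Necessarily $|z_n|\to\infty$, for otherwise $B((0,z_n),R)$ stays in a fixed ball and $u_n\weakto 0$ in $L^2_{\loc}$ would contradict the lower bound. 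Set $\tilde u_n:=u_n(\cdot+(0,z_n))$; then $\tilde u_n$ is bounded in $Y^{\cG(K)}$ (the shift in $z$ leaves the $|y|^{-2}$ term and the gradient term invariant), so up to a subsequence $\tilde u_n\weakto U\neq 0$ in $Y^{\cG(K)}$.

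The heart of the argument is to identify the limit equation. Because $|z_n|\to\infty$ and $V$ is $\cG(K)$-invariant and continuous with $V_\infty\le\liminf_{|x|\to\infty}V(x)$ by (V1), for the translated problem the potential $V(\cdot+(0,z_n))$ converges (locally uniformly, along a further subsequence, after relabeling so the relevant liminf is attained) to a constant $k\ge V_\infty$; testing $\cJ'(u_n)\to0$ against $\cG(K)$-invariant test functions and shifting, one finds $\Phi_k'(U)=0$, i.e. $U$ is a nontrivial weak solution of \eqref{eq:limiting} with this $k$, so $U\in\cM_k$ and $\Phi_k(U)\ge m_k$. Finally I would recover $c$ from below: by weak lower semicontinuity applied on the ball $B((0,z_n),R')$ for each fixed $R'$ and then letting $R'\to\infty$, together with the Brezis–Lieb–type splitting of $\int F(u_n)$, one gets $c=\lim\cJ(u_n)\ge\Phi_k(U)\ge m_k\ge m_{V_\infty}$, where the last inequality uses $k\ge V_\infty$ and the monotonicity $k\mapsto m_k$ (nondecreasing, by the same comparison as in Lemma \ref{lem:c-tilde}). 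I expect the main obstacle to be the bookkeeping around the singular weight: one must be sure that all translations are in the $z$-direction only and that no energy is lost to the region near $\{y=0\}$ under the weak limit — this is where the $\cG(K)$-invariance and the coercivity contribution of $\int u^2/|y|^2$ are essential, and it is the step that requires the most care to make rigorous.
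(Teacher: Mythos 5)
Your overall plan — bounded Palais--Smale sequence on $\cN$, then a dichotomy between nontrivial weak limit and escape of mass to infinity — is in the right spirit, and the non-vanishing translations in the $z$-direction via \cite[Corollary 3.32]{JM} are the same tool the paper uses. However, there is a genuine gap in the escaping case. You identify a ``limiting potential'' by claiming that $V(\cdot+(0,z_n))$ converges locally uniformly to a constant $k\geq V_\infty$. Under (V1) alone this is false: the hypothesis is only $V_\infty\leq\liminf_{|x|\to\infty}V(x)$, and the last limit is explicitly allowed to be $+\infty$, so $V$ need not converge to anything at infinity — it can oscillate, or blow up — and hence no limiting equation $\Phi_k'(U)=0$ with a \emph{constant} $k$ is available. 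Your argument would require (V2), which is not an assumption of this theorem, and even (V2) would only give $k=V_\infty$, not a subsequential constant.

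The paper avoids the limit-equation identification entirely. In the escaping case it keeps the \emph{untranslated} sequence and uses the Nehari-manifold maximality $\cJ(u_n)\geq\cJ(tu_n)$ together with the decomposition
$$
\cJ(tu_n)=\Phi_{V_\infty}(tu_n)+\tfrac12\int_{B(0,\rho)}(V-V_\infty)|tu_n|^2\,dx+\tfrac12\int_{\R^N\setminus B(0,\rho)}(V-V_\infty)|tu_n|^2\,dx,
$$
discards the outer term as nonnegative for large $\rho$ (this is where it first assumes the strict version of (V1), $\liminf_{|x|\to\infty}V>V_\infty$), takes $t=t_{V_\infty}(u_n)$ to bound $\Phi_{V_\infty}(tu_n)\geq m_{V_\infty}$, shows $t_{V_\infty}(u_n)$ is bounded via (F3), and then splits into the sub-cases $\|u_n\|_{L^2(B(0,\rho))}\to 0$ (giving $c\geq m_{V_\infty}$) or $\geq\gamma>0$ (giving a nontrivial weak limit and hence a critical value). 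Finally, and this is a second missing ingredient in your sketch, the paper removes the strict inequality by replacing $V_\infty$ with $V_\infty-\delta$ and letting $\delta\to0^+$, invoking the continuity result Theorem \ref{thm:cont_dep} to get $m_{V_\infty-\delta}\to m_{V_\infty}$. Without some such limiting argument (or the direct comparison above), your proof cannot handle the borderline case $\liminf_{|x|\to\infty}V(x)=V_\infty$ or the unbounded case $\liminf_{|x|\to\infty}V(x)=+\infty$.
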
 

\begin{proof}
    Suppose that the last inequality in (V1) is strict, namely
    \[\liminf_{|x|\to\infty} V(x)>V_\infty\]
    From \cite[Theorem 2.1]{BBJM}, there is a bounded sequence $(u_n) \subset \cN$ such that 
\[\cJ(u_n)\to c\quad \hbox{and} \quad\cJ'(u_n)\to 0.\]
Then, up to a subsequence, $(u_n)$ converges weakly in $X_1^{\cG(K)}$ and strongly in $L^s_{\mathrm{loc}}(\R^N)$, $2 \leq s < 2^*$ to $u$, that is a weak solution of the problem \eqref{eq:singular} with $\varepsilon = 1$. Then by \cite[Corollary 3.32]{JM}, there is a sequence $(z_n)\subset \R^{N-K}$, $\beta >0$ and $R>0$ such that
\begin{equation}\label{liminf}\liminf_{n\to\infty}\int_{B((0,z_n), R)}u_n^2\,dx>\beta.\end{equation}
Now we can distinguish two cases.
\begin{enumerate}
\item[{\textit{Case 1.}}]
If $(z_n)$ contains a bounded subsequence, we can assume that $u_n \weakto u\neq 0$ and $\cJ'(u) = 0$. Moreover for any radius $\rho>0$ by (F4) we have
\begin{equation}\label{na ogr}
\begin{split}
    \cJ(u_n)-\frac12\cJ'(u_n)u_n&=\int_{\R^N}\frac12f(u_n)u_n-F(u_n)\,dx\\
&\geq \int_{B(0,\rho)}\frac12f(u_n)u_n-F(u_n)\,dx\to \int_{B(0,\rho)}\frac12f(u)u-F(u)\,dx
\end{split}
\end{equation}
as $n\to \infty$. Because the left hand side of \eqref{na ogr} converges to $c$ as $n\to \infty$, and $\rho$ is arbitrary, we have
\[c\geq \int_{\R^N}\frac12f(u)u-F(u)\,dx.\]
Since $u \in X_1^{\cG(K)}$ is a critical point of $\cJ$, the right hand side of above inequality equals $\cJ(u)$. Since $u\neq 0$ we obtain that $\cJ(u)=c$ and theorem is proved in this case. 

\item[{\textit{Case 2.}}]Now assume that $(z_n)$ is an unbounded subsequence. Then for any $t>0, \rho>0$,
\begin{align*}
    \cJ(u_n)&\geq \cJ(t u_n)\\
    &=\Phi_{V_\infty} (t u_n)+\int_{B(0,\rho)} \frac12\left(V(x)-V_\infty\right)|t u_n|^2\,dx+\int_{\R^N\setminus B(0,\rho)} \frac12\left(V(x)-V_\infty\right)|t u_n|^2\,dx.
\end{align*}
We can choose $\rho$ so that $V(x)\geq V_\infty$ for all $|x|\geq\rho$. Hence 
\[\cJ( u_n)\geq\Phi_{V_\infty}(t u_n)+\int_{B(0,\rho)} \frac12\left(V(x)-V_\infty\right)|t u_n|^2\,dx.\]
Choose $t:=t_{V_\infty}(u_n)$. Then we obtain
\begin{equation}\label{nier z hat}\cJ( u_n)\geq m_{V_\infty}+\int_{B(0,\rho)} \frac12\left(V(x)-V_\infty\right)|t_{V_\infty}(u_n) u_n|^2\,dx\end{equation}
We claim that the sequence $(t_{V_\infty}(u_n))_n \subset(0,\infty)$ is bounded. Suppose by a contradiction that up to a subsequence $t_{V_\infty}(u_n)\to\infty$. Then by (F3) and (F4)
\[\int_{\R^N}|\nabla u_n|^2+\frac{u_n^2}{|y|^n}+V_\infty u_n^2\,dx=\int_{\R^N}\frac{f(t_{V_\infty}(u_n)u_n)t_{V_\infty}(u_n)u_n}{t_{V_\infty}(u_n)^2}\geq 2\int_{\R^N}\frac{F(t_{V_\infty}(u_n)u_n)}{t_{V_\infty}(u_n)^2}\to \infty\]
as $n\to\infty$. This is impossible since the left hand side of this inequality is bounded.

Suppose that there is a $\gamma>0$ such that
\begin{equation}\label{norma u l2}\|u_n\|_{L^2(B(0,\rho))}\geq \gamma.\end{equation}

Then, as in the case when $(z_n)$ stays bounded, $u_n$ converges, up to a subsequence, weakly in $X_1^{\cG(K)}$ to a nontrivial critical point of $\cJ$ and $\cJ(u)=c$, and the proof is completed.

Hence, assume that \eqref{norma u l2} does not hold. Then up to a subsequence 
\[\|u_n\|_{L^2(B(0,\rho))}\to0\]
as $n\to\infty$.
Then, by \eqref{nier z hat}, we get that $c\geq m_{V_\infty}$ and the proof is completed under a stronger version of (V1).
\end{enumerate}
Now we assume that (V1) holds and then, for $\delta>0$ we have,
\[\liminf_{n\to\infty}V(x)>V_\infty-\delta.\]
By just proved result, either $c$ is critical value of $\cJ$ or $c\geq m_{V_\infty-\delta}$. Suppose that $c$ is not a critical value of $\cJ$. Then by letting $\delta\to 0^+$, by Theorem \ref{thm:cont_dep}, we obtain that $c\geq \lim_{\delta \to 0^+} m_{V_\infty-\delta} = m_{V_\infty}$ and the proof is completed.
\end{proof}

\section{Existence of semiclassical states}

In this section we present the proof of the existence of semiclassical states. We extend the strategy from \cite{Rabinowitz} to a more general class of nonlinear functions $f$ and we estimate the minimal levels on Nehari manifolds instead of mountain pass levels.

\begin{proof}[Proof of Theorem \ref{thm:main1}]
Let $\varepsilon>0$. If $c_\varepsilon$ is not a critical value for $\cJ_\varepsilon$, then by Theorem \ref{nierownosc} \[c_\varepsilon\geq m_{V_\infty}.\]
We will show that this inequality is impossible using a comparison argument.
Let $w$ be the solution of \eqref{eq:limiting} with $k =V_\infty$ such that $\Phi_{V_\infty}(w)=m_{V_\infty}$. Let $R>0$ and $\chi_R\in C^1(\R^+,\R^+)$ be such that $\chi_R(t)=1$ for $t\leq R$, $\chi_R(t)=0$ for $t\geq R+2$, and $|\chi'_R(t)|<1$ for $t\in (R,R+2).$ We also set $v:=\chi_R w$. Then for any $\hat{\theta}>0,$
\[\gamma_R:=\max_{\theta \geq 0}\Phi_{V_\infty}(\theta v)\geq \cJ_\varepsilon(\hat{\theta}v)+\frac12\int_{B(0,R+2)}(V_\infty-V_\varepsilon(x))|\hat{\theta}v|^2\,dx \]
By choosing $\hat{\theta}:=t_{V_\varepsilon}(v)$ we obtain
\[\gamma_R\geq c_\varepsilon+\frac12\int_{B(0,R+2)}(V_\infty-V_\varepsilon)|\hat{\theta}v|^2\,dx.\]
For $\varepsilon$ small enough, $V_\infty-V_\varepsilon(x)\geq \frac12(V_\infty-V(0))$ in $B(0,R+2)$, so we can rewrite above inequality as
\[\gamma_R\geq c_\varepsilon+\frac14\left(V_\infty-V(0)\right)\hat{\theta}^2\int_{B(0,R+2)}v^2\,dx.\]

Note that $\hat{\theta}$ depends on $\varepsilon$ and $R$. We will prove later that
\begin{equation}\label{eq:thetaBoundedAway}
\mbox{there exist } \theta_0>0 \mbox{ such that } \hat{\theta}\geq \theta_0 \mbox{ for sufficiently small } \varepsilon \mbox{ and large }R. 
\end{equation}
For now, assume that \eqref{eq:thetaBoundedAway} holds. Choose $R$ sufficiently large so that
\[\int_{B(0,R+2)}v^2\,dx\geq\frac12\int_{\R^N}w^2\,dx,\]
that gives us
\begin{equation}\label{eq:psi_nier}
\gamma_R\geq c_\varepsilon+\frac18\left(V_\infty-V(0)\right)\hat{\theta}^2\int_{\R^N}w^2\,dx.
\end{equation}
On the other hand, we will show that 
\begin{equation}\label{eq:existsPsi}
\mbox{there is } \psi : (0,\infty) \rightarrow (0, \infty) \mbox{ such that } \psi(R)\to 0\mbox{ as }R\to\infty\mbox{ and } \gamma_R\leq m_{V_\infty}+\psi(R).
\end{equation}
Assuming in addition that \eqref{eq:existsPsi} holds, choosing $R$ so large that
\[\psi(R)<\frac18\left(V_\infty-V(0)\right)\theta_0^2\int_{\R^N}w^2\,dx,\]
so \eqref{eq:psi_nier} implies that $m_{V_\infty}>c_\varepsilon,$ contrary to Theorem \ref{nierownosc}.
To conclude we need to verify \eqref{eq:thetaBoundedAway} and \eqref{eq:existsPsi}. 

To show \eqref{eq:thetaBoundedAway} note that $\hat{\theta}$ is characterized by 
\[\hat{\theta}^2\int_{\R^N}|\nabla v|^2+\frac{u^2}{|y|^2}+V_\varepsilon(x) v^2\,dx=\int_{\R^N}f(\hat{\theta}v)\hat{\theta}v\,dx.\]
From (F1) and (F2) we obtain that for every $\delta>0$ there exists $C_\delta>0$ such that 
\[|f(u)|\leq \delta |u|+ C_\delta |u|^{p-1}.\]
Hence, combining above inequality and (V1), we obtain that
\[\hat{\theta}^2\int_{\R^N}|\nabla v|^2+\frac{v^2}{|y|^2}+V_0 v^2\,dx\leq \int_{\R^N}\delta\hat{\theta}^2v^2+C_\delta|\hat{\theta}v|^{p}\,dx.\]
Choosing $\delta:=\frac{V_0}{2}$ we obtain
\[\hat{\theta}^2\int_{\R^N}|\nabla v|^2+\frac{v^2}{|y|^2}+\frac{V_0}{2} v^2\,dx\leq \int_{\R^N}C_{V_0/2}|\hat{\theta}v|^{p}\,dx.\]
Observe that 
\[\int_{\R^N}|v|^{p}\,dx\leq \int_{\R^N}|w|^{p}\,dx \]
and
\[\int_{\R^N}|\nabla v|^2+\frac{v^2}{|y|^2}+\frac{V_0}{2}v^2\,dx\geq \int_{B(0,R)}|\nabla w|^2+\frac{w^2}{|y|^2}+\frac{V_0}{2}w^2\,dx.\]
For sufficiently large $R$ we have
\[\int_{B(0,R)}|\nabla w|^2+\frac{w^2}{|y|^2}+\frac{V_0}{2}w^2\,dx\geq \frac12\int_{\R^N}|\nabla w|^2+\frac{w^2}{|y|^2}+\frac{V_0}{2}w^2\,dx.\]
Therefore combining above inequalities we obtain
\[\hat{\theta}\geq \left(\frac{\frac12\int_{\R^N}|\nabla w|^2+\frac{w^2}{|y|^2}+\frac{V_0}{2}w^2\,dx}{C_{V_0 / 2}\int_{\R^N}|w|^{p}\,dx}\right)^\frac{1}{p-2}=:\theta_0>0.\]

To show \eqref{eq:existsPsi} note that, from the definition of $\gamma_R$ we have
\[\gamma_R=\Phi_{V_\infty} (t_{V_\infty} (v)v)=m_{V_\infty}+\Phi_{V_\infty}(t_{V_\infty} (\chi_Rw)\chi_Rw)-\Phi_{V_\infty}(w),\]
so we only need to show that
\[|\Phi_{V_\infty}(t_{V_\infty} (\chi_Rw) \chi_Rw)-\Phi_{V_\infty}(w)|\to 0 \hbox{ as } R\to \infty\]
 and then we can just take 
\[
\psi(R):=\Phi_{V_\infty}(t_{V_\infty} (\chi_Rw) \chi_Rw)-\Phi_{V_\infty}(w).
\]
If $R\to\infty$ then $\chi_Rw\to w$ in $X.$ Hence $t_{V_\infty}(\chi_Rw)\to t_{V_\infty}(w) = 1$, which shows the requested convergence.
\end{proof}

\section{Asymptotic analysis}

We start by showing a decay at infinity of solutions to \eqref{eq:singular} and the limiting problem \eqref{eq:limiting}. We follow (with some minor changes) arguments from \cite[Section 6]{BadialeBenciRolando} and we prove the following general result.  

\begin{Th}\label{thm:decay}
Suppose that $V\in \cC(\R^N)$, $\inf V>0$ and (F1)--(F4) hold. Then any nonnegative weak solution $u$ in $X_1$ to \eqref{eq:singular} with $\eps=1$ belongs to $L^\infty (\R^N)$ and satisfies
$$
\limsup_{|x|\to\infty} |x|^\nu u(x) = 0
$$ 
for any $\nu < \frac{N-2+\sqrt{(N-2)^2+4}}{2}$. 
\end{Th}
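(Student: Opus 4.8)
The plan is to first establish $L^\infty$-boundedness of the nonnegative weak solution $u$ via a Moser-type iteration adapted to the singular term $u/|y|^2$, and then to derive the polynomial decay rate by a comparison argument with an explicit radial supersolution. For the boundedness step, the key observation is that the singular potential $1/|y|^2$ is \emph{nonnegative}, so it works in our favour: testing the equation against truncated powers $u_M^{2(\beta-1)}u$ (where $u_M := \min\{u,M\}$) and discarding the nonnegative contribution $\int u_M^{2(\beta-1)} u^2/|y|^2\,dx\geq 0$, we are left with an estimate
$$
\int_{\R^N} |\nabla (u_M^{\beta-1}u)|^2\,dx \lesssim \beta^2\!\int_{\R^N} (1+|u|^{p-2})\, u_M^{2(\beta-1)}u^2\,dx,
$$
using (F1)–(F2) to bound $f(u)u\lesssim u^2 + |u|^p$. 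Since $u\in X_1\subset H^1(\R^N)$ and $p<2^*$, the subcritical nonlinearity together with the Sobolev inequality $|w|_{2^*}\lesssim|\nabla w|_2$ lets the standard Moser iteration on the exponents $\beta_k = (2^*/2)^k$ close, yielding $u\in L^\infty(\R^N)$ with a quantitative bound. Because we never use the potential $V$ except through $\inf V>0$, and never divide by $1/|y|^2$, the singularity causes no difficulty here — this is exactly the point emphasised after \eqref{eq:J-eps}.

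For the decay step, I would borrow the argument of \cite[Section 6]{BadialeBenciRolando}. Since $u$ is now bounded and $f(u)=o(u)$ as $u\to 0$, for any $\delta>0$ there is $\rho_\delta$ with $f(u(x))\leq \delta\, u(x)$ for $|x|\geq \rho_\delta$ (using $u\to 0$ at infinity, which itself follows from $u\in H^1\cap L^\infty$ plus a standard argument, or can be extracted during the iteration). Then on $\{|x|>\rho_\delta\}$ the solution satisfies the differential inequality
$$
-\Delta u + \frac{u}{|y|^2} + (\inf V - \delta)\, u \leq 0.
$$
The natural comparison function is a radial function $\varphi(x)=|x|^{-\nu}$, for which $-\Delta \varphi = \nu(\nu - (N-2))|x|^{-\nu-2}$, so that $-\Delta\varphi + \varphi/|y|^2$ is controlled by $\bigl(\nu(\nu-(N-2)) + c\bigr)|x|^{-\nu-2}$ on suitable cones where $|y|\gtrsim|x|$; the exponent $\nu$ for which $\nu(\nu-(N-2)) = -1$, i.e. $\nu^2-(N-2)\nu+1=0$, has larger root $\frac{N-2+\sqrt{(N-2)^2+4}}{2}$, which is precisely the threshold in the statement. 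The idea is: fix $\nu$ below this threshold, then $\nu(\nu-(N-2))+1 < 1$ hmm — more carefully, one picks $\nu < \frac{N-2+\sqrt{(N-2)^2+4}}{2}$ so that $\nu^2-(N-2)\nu + 1 > 0$ is false; rather $\nu^2 - (N-2)\nu - 1 < 0$, making $\varepsilon\,|x|^{-\nu}$ a supersolution of the linear inequality on an exterior region once the lower-order terms are absorbed, and then the weak maximum principle on annuli $\{R<|x|<R'\}$ (letting $R'\to\infty$, using $u\to0$) forces $u(x)\leq \varepsilon |x|^{-\nu}$ for $|x|$ large. As $\varepsilon>0$ is arbitrary this gives $\limsup_{|x|\to\infty}|x|^\nu u(x)=0$.

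The main obstacle I anticipate is handling the singular term $u/|y|^2$ correctly in the comparison argument near the axis $\{y=0\}$: the naive radial barrier $|x|^{-\nu}$ does not see the cylindrical singularity, and on the set where $|y|$ is small the term $u/|y|^2$ is large but has a favourable sign. The cleanest route is to exploit that $1/|y|^2 \geq 0$ everywhere, so dropping it only strengthens the needed supersolution inequality $-\Delta\varphi + (\inf V-\delta)\varphi \geq 0$ — and for this a pure exponential or a function like $\varphi(x)=(1+|x|^2)^{-\nu/2}$ combined with the fact that $\inf V - \delta > 0$ already forces at least this polynomial (indeed exponential) decay; but to get the \emph{sharp} exponent tied to $(N-2)^2+4$ rather than to $\inf V$, one must instead keep the $u/|y|^2$ term and use a barrier adapted to the $\cG(K)$-symmetry, e.g. comparing on the region $|y|\geq \tfrac12|x|$ and treating the complementary thin cone separately using that $u$ is $\cG(K)$-invariant and bounded. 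Making this dichotomy rigorous — matching the two regions and controlling constants uniformly — is where the real work lies; the rest is routine elliptic machinery, and I would lean on \cite[Section 6]{BadialeBenciRolando} for the precise construction of the barrier and the iteration bookkeeping.
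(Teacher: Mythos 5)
Your Moser-iteration route to $u\in L^\infty(\R^N)$ (testing with truncated powers and discarding the nonnegative term $\int u_M^{2(\beta-1)}u^2/|y|^2\,dx$) is a reasonable alternative to what the paper does: the paper instead decomposes $f=f_1+f_2$ with $|f_2(\zeta)|\leq\tfrac{\inf V}{2}|\zeta|$ near the origin, absorbs $V u - f_2(u)\geq 0$, and invokes \cite[Theorem 26]{BadialeBenciRolando}/\cite{Egnell} to obtain both boundedness and the preliminary rate $\limsup_{|x|\to\infty}|x|^{N-2}u(x)<\infty$ in one stroke. Your version is more self-contained, and, as you note, a localized Moser estimate does give $u(x)\to0$ as $|x|\to\infty$, which is all that is subsequently needed to make $f_1(u)=\chi_{\{|\zeta|\geq r\}}(\zeta)f(\zeta)$ vanish for large $|x|$.

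The real gap is in the decay step. You correctly identify that the exponent $\frac{N-2+\sqrt{(N-2)^2+4}}{2}$ is governed by the quadratic $\nu^2-(N-2)\nu-1$, but the obstacle you describe (``the naive radial barrier $|x|^{-\nu}$ does not see the cylindrical singularity'') and the proposed fix (a dichotomy over cones $|y|\gtrsim|x|$ versus a thin neighbourhood of the axis) are unnecessary, and you stop short of an actual argument. The simple observation that resolves everything is that $x=(y,z)$ implies $|y|\leq|x|$, hence for a nonnegative solution
\[
\frac{u}{|y|^2}\ \geq\ \frac{u}{|x|^2}\qquad\text{pointwise on }\R^N,
\]
so one does not \emph{drop} the singular term (which would lose the sharp rate, as you worried) but \emph{replaces} it by the weaker radial potential $1/|x|^2$: on $\{|x|>R\}$ one gets the differential inequality $\int\nabla u\,\nabla\varphi\,dx\leq -(1-\delta)\int u\varphi|x|^{-2}\,dx$ for nonnegative test functions, and the purely radial comparison function $v_\delta(x)=|x|^{-\frac{N-2+\sqrt{(N-2)^2+4(1-\delta)}}{2}}$, which solves $-\Delta v_\delta=-(1-\delta)|x|^{-2}v_\delta$, works on all of $\{|x|>R\}$ with no cone-splitting. (Also, as a minor point, your computation $-\Delta(|x|^{-\nu})=\nu(\nu-(N-2))|x|^{-\nu-2}$ has the wrong sign; the correct identity is $-\Delta(|x|^{-\nu})=-\nu(\nu-(N-2))|x|^{-\nu-2}$, which is what produces $\nu^2-(N-2)\nu-1\leq 0$ as the supersolution condition.)
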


\begin{proof}
Let $u \geq 0$ be a weak solution to \eqref{eq:singular} with $\eps=1$. Let $1 < a < 2^* - 1$ and let $\varphi \in \cC_0^\infty(\R^N)$ be a nonnegative test function. (F2) implies that we may choose a small radius $r > 0$ such that $|f(\zeta)| \leq \frac{\inf V}{2}  |\zeta|$ for $|\zeta| < r$. Then
\begin{align*}
\int_{\R^N} \nabla u\nabla \varphi \leq \int_{\R^N} \nabla u \nabla \varphi + \frac{u \varphi}{|y|^2} + V(x)u \varphi - f_2 (u) \varphi \, dx &= \int_{\R^N} f_1(u) \varphi \, dx \\ &= \int_{\R^N} \phi(x,u) \varphi \, dx,
\end{align*}
where we set 
\begin{align*}
&f_1 (\zeta) := \chi_{(-r,r)^c} (\zeta) f(\zeta), \quad f_2(\zeta) := f(\zeta) - f_1(\zeta),\\
&\phi(x,\zeta) := f_1 \left( u_k(x)^{(2^* - 1 - a)/(2^* - 1)} |\zeta|^{a/(2^*-1) } \right), \quad (x,\zeta) \in \R^N \times \R.
\end{align*}
Observe that $\phi(x, u(x)) = f_1 (u(x))$, $|f_1 (\zeta)| \lesssim |\zeta|^{2^*-1}$ and hence
$$
\phi(x,\zeta) \lesssim u(x)^{2^*-1-a} |\zeta|^a.
$$
Note that $u^{2^*-1-a} \in L^{2^* / (2^* - 1 - a)} (\R^N)$. Hence, by \cite[Theorem 26]{BadialeBenciRolando}, \cite{Egnell},
$$
\limsup_{|x|\to\infty} |x|^{N-2} u(x) < \infty.
$$
Now, observe that for $\delta\in (0,1)$ we can choose sufficiently large $R>0$ such that $f_1(u)\lesssim |x|^{-4}u$ for $|x|\geq R$, and we may assume that
$f_1(u)\leq \delta |x|^{-2}u$ for $|x|\geq R$. Then
arguing in a similar way as above we show that 
\begin{align*}
\int_{\R^N\setminus B(0,R)}\nabla u\nabla \vp\,dx &\leq \int_{\R^N\setminus B(0,R)}f_1(u)\vp-\frac{u\vp}{|y|^2}\,dx \leq \int_{\R^N\setminus B(0,R)}f_1(u)\vp-\frac{u\vp}{|x|^2}\,dx\\ &
\leq -(1-\delta)  \int_{\R^N\setminus B(0,R)}\frac{u\vp}{|x|^2}\,dx
\end{align*}
for any nonnegative $\vp\in \cC_0^{\infty}(\R^N\setminus \overline{B(0,R)})$.
Since $-\Delta v_\delta = -(1-\delta)|x|^{-2}v_\delta $ we  find a constant $C>0$ such that $Cv_\delta-u\geq 0$ for $|x|\geq R$, where $v_\delta (x):= |x|^{- \frac{N-2+\sqrt{(N-2)^2+4(1-\delta)}}{2}}$, see \cite[Section 6]{BadialeBenciRolando} for details. Therefore
$$
\limsup_{|x|\to\infty} |x|^\nu u(x) <\infty
$$
for any $\nu \leq \frac{N-2+\sqrt{(N-2)^2+4(1-\delta)}}{2}$. Since $\delta$ was arbitrary and $ \frac{N-2+\sqrt{(N-2)^2+4(1-\delta)}}{2}$ is decreasing with respect to $\delta$, the statement holds for all $\nu <  \frac{N-2+\sqrt{(N-2)^2+4}}{2}$. To see that the limit is equal to zero, fix any $\nu <  \frac{N-2+\sqrt{(N-2)^2+4}}{2}$ and choose $\delta > 0$ so small that $\nu + \delta <  \frac{N-2+\sqrt{(N-2)^2+4}}{2}$. Then
$$
\limsup_{|x|\to\infty} |x|^\nu u(x) = \limsup_{|x|\to\infty} |x|^{-\delta} |x|^{\nu + \delta} u(x) = 0.
$$
\end{proof}

\begin{Cor}\label{cor:decay} 
Suppose that $V\in \cC^{\cG(2)}(\R^3)$, $\inf V>0$ and (F1)--(F4) hold.
Suppose that $u\in X_1^{\cG(2)}$ is a nonnegative solution to \eqref{eq:singular} with $\eps = 1$. Then
$$
\U (x) := \frac{u}{\sqrt{x_1^2 + x_2^2}} \left( \begin{array}{c}
-x_2 \\ x_1 \\ 0
\end{array} \right)
$$
is a weak solution to \eqref{eq:maxwell} with $\mu = 1$, that is $\cJ_{curl}'(\U)(\Psi)=0$ for any $\Psi\in\cC_0^{\infty}(\R^3;\R^3)$, where
\begin{equation}\label{eq:Jcurl}
\cJ_{curl}(\U):=\frac12\int_{\R^3}|\curl \U|^2+V(x)|\U|^2 \,dx-\int_{\R^3}G(\U) \, dx.
\end{equation}
Moreover $\U \in L^\infty (\R^3; \R^3)$, $\div(\U)=0$ and we have the following decay
$$
\limsup_{|x|\to\infty} |x|^\nu | \U(x) | = 0 \quad \hbox{for every } \nu <  \frac{N-2+\sqrt{(N-2)^2+4}}{2}.
$$
\end{Cor}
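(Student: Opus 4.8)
The strategy is to reduce the statement to the already-established correspondence between the scalar problem \eqref{eq:singular} and the curl--curl problem, supplemented by the decay estimate of Theorem \ref{thm:decay}. I would begin by recording, as in \cite[Theorem 2.1]{GMS} (cf. \cite{BB,BMS}), the structural facts for a field of the cylindrical form \eqref{eq:Uu} with $u\in X_1^{\cG(2)}$: the pointwise identity $|\U(x)|=|u(x)|$ for a.e. $x\in\R^3$; the energy identity
$$
\int_{\R^3}|\curl\U|^2\,dx=\int_{\R^3}|\nabla u|^2+\frac{u^2}{|y|^2}\,dx ,
$$
which follows by passing to cylindrical coordinates and integrating by parts in $r$ (the boundary terms vanish since $u/|y|\in L^2$); and $\int_{\R^3}G(\U)\,dx=\int_{\R^3}F(u)\,dx$, because $G(\alpha w)=F(\alpha)$ whenever $|w|=1$. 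Together with $\int_{\R^3}V(x)|\U|^2\,dx=\int_{\R^3}V(x)u^2\,dx$ these give $\cJ_{curl}(\U)=\cJ_1(u)$ (with $\eps=1$, so that $V_1\equiv V$ and \eqref{eq:curlcurl} becomes \eqref{eq:maxwell} with $\mu=1$).

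Next I would invoke the symmetric criticality argument behind \cite[Theorem 2.1]{GMS}: the map $u\mapsto\U$ of \eqref{eq:Uu} embeds $X_1^{\cG(2)}$ isometrically into the energy space of $\cJ_{curl}$, the functional $\cJ_{curl}$ is invariant under the induced $\cG(2)$-action (since $V$ and $G$ are $\cG(2)$-invariant), and consequently a critical point of $\cJ_1$ on $X_1^{\cG(2)}$ is carried to a genuine (unconstrained) critical point of $\cJ_{curl}$; hence $\cJ_{curl}'(\U)(\Psi)=0$ for every $\Psi\in\cC_0^\infty(\R^3;\R^3)$, i.e. $\U$ is a weak solution to \eqref{eq:maxwell} with $\mu=1$. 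That $\div\U=0$ is a direct computation: writing $\U=(u/r)(-x_2,x_1,0)$ and using that $u/r$ depends only on $(r,x_3)$, the chain rule gives $\partial_{x_1}\U_1=-\frac{x_1x_2}{r}\partial_r(u/r)$, $\partial_{x_2}\U_2=\frac{x_1x_2}{r}\partial_r(u/r)$, and $\partial_{x_3}\U_3=0$, so the three contributions cancel.

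It remains to transfer the $L^\infty$ bound and the decay rate. Since $u\in X_1^{\cG(2)}\subset X_1$ is a nonnegative weak solution to \eqref{eq:singular} with $\eps=1$, $V\in\cC(\R^3)$ and $\inf V>0$, Theorem \ref{thm:decay} (with $N=3$) gives $u\in L^\infty(\R^3)$ and $\limsup_{|x|\to\infty}|x|^\nu u(x)=0$ for every $\nu<\frac{N-2+\sqrt{(N-2)^2+4}}{2}$. Using $|\U(x)|=|u(x)|$ a.e., this yields at once $\U\in L^\infty(\R^3;\R^3)$ and the asserted decay of $|\U|$. The only substantial point in the whole argument is the equivalence of critical points used in the second paragraph --- namely that $\cJ_{curl}$ is of $\cC^1$-class on the relevant subspace and that the cylindrical ansatz \eqref{eq:Uu} is compatible with the principle of symmetric criticality --- but this is precisely the content of \cite[Theorem 2.1]{GMS} (see also \cite{BB,BMS}), so the present proof amounts to invoking that result and appending the short divergence and decay computations above.
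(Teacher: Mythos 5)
Your proof is correct and follows essentially the same route as the paper: the paper simply cites \cite[Theorem 2.1]{GMS} (with the observation that the $V\equiv 0$ proof extends verbatim to a potential) for the equivalence $\cJ(u)=\cJ_{curl}(\U)$ and weak-solution correspondence, and then invokes Theorem \ref{thm:decay} for the $L^\infty$ bound and decay, exactly as you do. The only difference is that you spell out the pointwise identity $|\U|=|u|$, the energy identity, the $\div\U=0$ computation, and the symmetric-criticality step, all of which the paper subsumes into the citation of \cite{GMS}.
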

\begin{proof}
The equivalence result for problems \eqref{eq:curlcurl} and \eqref{eq:singular} has been obtained in \cite[Theorem 2.1]{GMS} for the case $V\equiv 0$. By the inspection of the proof, we easily conclude that $\U$ is a weak solution to  \eqref{eq:curlcurl}  and $\cJ(u)=\cJ_{curl}(\U)$, cf. \cite{BB,BMS}. Decay properties follow from Theorem \ref{thm:decay}.
\end{proof}

Observe that (V1) implies that $V \in L^q_{\mathrm{loc}} (\R^N)$ for any $q \geq 1$. Moreover, from (V1) and (V2) we get that $V \in L^\infty (\R^N)$ and $X_\varepsilon^{\cG(K)} = Y^{\cG(K)}$. From now on we again assume that $V\in \cC^{\cG(K)}(\R^N)$, $N>K\geq 2$.

\begin{Lem} \label{ineq1}
Suppose that (V1), (V3), (F1)--(F4) hold and $f$ is odd. Then $\limsup_{\varepsilon \to 0^+} c_{\varepsilon} \leq m_{V_\infty}$.
\end{Lem}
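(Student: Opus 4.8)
The plan is to prove the upper bound $\limsup_{\eps\to0^+} c_\eps \leq m_{V_\infty}$ by constructing, for each small $\eps$, a cheap competitor on the Nehari manifold $\cN_\eps$ whose energy is close to $m_{V_\infty}$. The natural competitor is a far-away translate of the ground state of the limiting problem at level $k=V_\infty$. Let $w_\infty \in Y^{\cG(K)}$ be a nonnegative minimizer with $\Phi_{V_\infty}(w_\infty)=m_{V_\infty}$, furnished by Theorem \ref{thm:limit}. Fix a large cutoff radius $R$ and set $v_R := \chi_R w_\infty$ as in the proof of Theorem \ref{thm:main1}; by the argument there, $\Phi_{V_\infty}(t_{V_\infty}(v_R)v_R)\to m_{V_\infty}$ as $R\to\infty$, so it suffices to make the external potential $V_\eps(x)=V(\eps x)$ look like the constant $V_\infty$ on the (bounded) support of a translate of $v_R$.

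First I would exploit the symmetry: since $V\in\cC^{\cG(K)}(\R^N)$ and (V1) forces $V_\infty\leq\liminf_{|x|\to\infty}V(x)$, the hypothesis (V2) gives $\lim_{|x|\to\infty}V(x)=V_\infty$, so for any $\rho>0$ there is $M>0$ with $|V(x)-V_\infty|<\rho$ whenever $|x|\geq M$. Now take a translation $(0,z)\in\R^K\times\R^{N-K}$ with $|z|$ large; the function $v_R(\cdot-(0,z))$ is still $\cG(K)$-invariant (translation in the $z$-variables preserves $\cG(K)$-invariance) and lies in $Y^{\cG(K)}=X_\eps^{\cG(K)}$. On its support, contained in a ball of radius $R+2$ about $(0,z)$, we have $|\eps x|\geq \eps(|z|-R-2)$, so choosing $|z|=|z(\eps)|$ with $\eps|z(\eps)|\to\infty$ (e.g. $|z(\eps)|=\eps^{-1/2}$) guarantees $|V_\eps(x)-V_\infty|<\rho$ on that support once $\eps$ is small. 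Then I would compare the two Nehari-projected energies: writing $u_\eps^{\sharp}:=t_{V_\eps}\big(v_R(\cdot-(0,z(\eps)))\big)\,v_R(\cdot-(0,z(\eps)))\in\cN_\eps$, translation invariance of $\Phi_{V_\infty}$ and the pointwise bound $|V_\eps-V_\infty|<\rho$ on the support give
$$
c_\eps \leq \cJ_\eps(u_\eps^{\sharp}) \leq \Phi_{V_\infty}\big(t_{V_\infty}(v_R)v_R\big) + \tfrac{\rho}{2}\, t_{V_\eps}(v_R(\cdot-(0,z(\eps))))^2\, |v_R|_2^2,
$$
using $\cJ_\eps(tv)=\Phi_{V_\infty}(tv)+\tfrac12\int (V_\eps-V_\infty)|tv|^2$ and that $\Phi_{V_\infty}$ is maximized at $t_{V_\infty}(v_R)$. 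The projection coefficients $t_{V_\eps}(\cdot)$ are bounded above uniformly (the same (F1)–(F2) computation as in the proof of Theorem \ref{thm:main1}, bounding $f(u)u\lesssim \delta u^2 + C_\delta|u|^p$ and using the coercivity of $\|\cdot\|_\eps$ via (V1)), so the error term is $O(\rho)$ uniformly in small $\eps$. Letting $\eps\to0^+$ then $R\to\infty$ then $\rho\to0^+$ yields $\limsup_{\eps\to0}c_\eps\leq m_{V_\infty}$.

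The main obstacle I anticipate is not any single estimate but the bookkeeping of the three nested limits, and in particular checking that $t_{V_\eps}\big(v_R(\cdot-(0,z(\eps)))\big)$ stays bounded and bounded away from zero uniformly as $\eps\to0$ with $R$ fixed — the lower bound is needed if one wants a matching statement, though for the $\limsup$ inequality only the upper bound on $t_{V_\eps}$ is strictly required. A mild subtlety is that the singular term $u^2/|y|^2$ is translation-invariant only in the $z=(x_{K+1},\dots,x_N)$ directions, which is exactly why the translates are taken in $\R^{N-K}$; this is consistent with the $\cG(K)$-symmetry and with the form of the translates appearing in Theorem \ref{thm:main2}. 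Note also that (V3) is not actually needed for this particular inequality — it is listed in the hypotheses presumably for uniformity with the surrounding asymptotic analysis — so I would simply not use it. Apart from that, every ingredient (existence of $w_\infty$, the Nehari projection lemma, the cutoff argument) is already available from Theorems \ref{thm:limit} and \ref{thm:main1} and the functional-setting section.
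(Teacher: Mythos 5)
Your proof takes a genuinely different route from the paper's and, modulo a hypothesis issue discussed below, it works. The paper plugs the limiting ground state $u_0$ (without any cutoff or translation) directly into $\cJ_\varepsilon$ and then must control $\int_{\R^N}(V_\varepsilon-V_\infty)u_0^2$. The delicate region is near the origin, where the rescaled potential does \emph{not} approach $V_\infty$: after a change of variables the near-origin contribution becomes $\int_{|x|<M}\varepsilon^{-N}(V(0)-V(x))\,u_0(x/\varepsilon)^2\,dx$, which is shown to vanish by combining the polynomial decay of $u_0$ from Theorem~\ref{thm:decay} (this is where oddness of $f$ and the existence of a nonnegative minimizer are used) with the H\"older continuity of $V$ at $0$. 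So (V3) is \emph{genuinely used} in the paper's argument, not merely ``for uniformity'' as you conjecture. Your alternative — cut off $w_\infty$ and translate it by $(0,z(\varepsilon))$ with $\varepsilon|z(\varepsilon)|\to\infty$ — removes the origin from the picture entirely and therefore removes the need for (V3) and for the decay estimate; this is a real simplification.

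The cost of your route, however, is that it relies essentially on (V2): you need $|V_\varepsilon-V_\infty|<\rho$ on the whole support of the translate, i.e.\ a \emph{two-sided} approach of $V$ to $V_\infty$ at infinity. Under (V1) alone, $\liminf_{|x|\to\infty}V(x)\geq V_\infty$ only gives the one-sided bound $V_\varepsilon-V_\infty\geq-\delta$ on that support; $V_\varepsilon-V_\infty$ could be large and positive there, and then $\cJ_\varepsilon(u_\varepsilon^\sharp)$ is not controlled by $\Phi_{V_\infty}$ plus a small error. You invoke (V2) explicitly in the proof but present it as one of ``the hypotheses'', whereas the lemma as stated lists only (V1), (V3), (F1)--(F4) and $f$ odd. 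In fairness, the paper's own proof also appears to require $V\in L^\infty$ (otherwise the outer region $|x|\geq M/\varepsilon$ has no lower bound on $(V_\infty-V_\varepsilon)u_0^2$), and the text immediately preceding the lemma discusses (V1) and (V2) together, so the omission of (V2) from the lemma's statement is plausibly an oversight of the paper rather than a defect of your argument. You should, however, flag this dependency explicitly rather than cite ``the hypothesis (V2)'' as if it were listed. One more small loose end: you say the upper bound on $t_{V_\varepsilon}(v_R(\cdot-(0,z(\varepsilon))))$ follows from ``the same (F1)--(F2) computation'' as in Theorem~\ref{thm:main1}, but that computation gives the \emph{lower} bound; the upper bound comes from (F3) (superquadraticity of $F$) together with the uniform upper bound on $\|v_R(\cdot-(0,z(\varepsilon)))\|_\varepsilon$, exactly as in the paper's boundedness argument for $t_\varepsilon$. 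With these two points addressed, the argument is sound, and the three nested limits you describe do close in either of the orders you suggest.
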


\begin{proof}
Let $u_{0} \in Y^{\cG(K)}$ be a nonnegative weak solution to \eqref{eq:limiting} with $k=V_\infty$ such that $\Phi_{V_\infty} (u_0) = m_{V_\infty}$. Observe that (V1) implies that for any $\delta > 0$ there is $M = M_\delta$ such that
$$
V(x) \geq V_\infty - \delta \quad \mbox{for } |x| \geq M. 
$$
Hence
$$
 \int_{|x| \geq M/\varepsilon} (V_\infty-V_\varepsilon (x)) u_0^2 \, dx \leq \delta \int_{\R^N} u_0^2 \, dx. 
$$
On the other hand
\begin{align*}
 \int_{|x| < M/\varepsilon} (V_\infty-V_\varepsilon (x)) u_0^2 \, dx  &=    \int_{|x| < M} \varepsilon^{-N} (V_\infty-V (x)) u_0(x/\varepsilon)^2 \, dx \\
&\geq \int_{|x| < M} \varepsilon^{-N} (V(0)-V (x)) u_0(x/\varepsilon)^2 \, dx.
\end{align*}
Note that, thanks to Theorem \ref{thm:decay},
\begin{align*}
&\quad \left| \int_{|x| < M} \varepsilon^{-N} (V(0) -V (x)) u_0(x/\varepsilon)^2 \, dx \right| \\
&= \left|  \int_{|x| < M} \varepsilon^{-N+2\nu} (V(0)-V (x)) \left( u_0(x/\varepsilon) (|x|/\varepsilon)^\nu \right)^2 |x|^{-2\nu} \, dx\right| \\
&\lesssim \int_{|x| < M} \varepsilon^{-N+2\nu} |V(0)-V (x)|  |x|^{-2\nu} \, dx = \int_{|x| < M} \varepsilon^{-N+2\nu} |V(0)-V (x)|  |x|^{N-2\nu} |x|^{-N} \, dx \\
&\lesssim \int_{|x| < M} \varepsilon^{-N+2\nu} |x|^{-N} \, dx \lesssim \varepsilon^{-N+2\nu} \to 0 \quad \mbox{ as } \varepsilon \to 0^+,
\end{align*}
where $\nu \in \left( \frac{N}{2}, \frac{N-2+\sqrt{(N-2)^2+4}}{2} \right)$ is chosen, thanks to (V3), so that 
$$
\limsup_{|x| \to 0} |V(0) - V(x)| |x|^{N-2\nu} < \infty.
$$
Thus
$$
\liminf_{\varepsilon \to 0^+} \int_{|x| < M/\varepsilon} (V_\infty-V_\varepsilon (x)) u_0^2 \, dx \geq \liminf_{\varepsilon \to 0^+} \int_{|x| < M} \varepsilon^{-N} (V(0)-V (x)) u_0(x/\varepsilon)^2 \, dx = 0.
$$
Hence
$$
\liminf_{\varepsilon \to 0^+} \int_{\R^N} (V_\infty-V_\varepsilon (x)) u_0^2 \, dx \geq -\delta \int_{\R^N} u_0^2 \, dx.
$$
Since $\delta > 0$ was arbitary, we get 
$$
\liminf_{\varepsilon \to 0^+} \int_{\R^N} (V_\infty-V_\varepsilon (x)) u_0^2 \, dx \geq 0
$$
or, equivalently,
\begin{equation}\label{eq:limsup}
\limsup_{\varepsilon \to 0^+} \int_{\R^N} (V_\varepsilon (x)-V_\infty) u_0^2 \, dx \leq 0
\end{equation}
We note also that $t_{V_\varepsilon}(u_0)$ stays bounded as $\varepsilon \to 0^+$. Indeed, denote $t_\varepsilon := t_{V_\varepsilon} (u_0)$ and suppose that $t_\varepsilon \to \infty$. From Fatou's lemma and (F3) we have that
$$
\int_{\R^N} |\nabla u_0|^2 + \frac{u_0^2}{|y|^2} + V_\varepsilon(x) u_0^2 \, dx = \int_{\R^N} \frac{f( t_\varepsilon u_0 ) t_\varepsilon u_0}{t_\varepsilon^2} \, dx \geq 2 \int_{\R^N} \frac{F( t_\varepsilon u_0 )}{t_\varepsilon^2} \, dx \to \infty.
$$
Hence
$$
\int_{\R^N} V_\varepsilon(x) u_0^2 \, dx \to \infty,
$$
which is a contradiction with \eqref{eq:limsup}. Thus $(t_\varepsilon)$ is bounded and then
\begin{align*}
\limsup_{\varepsilon \to 0^+} c_{\varepsilon}  &= \limsup_{\varepsilon \to 0^+} \inf_{\cN_{\varepsilon}} \cJ_{\varepsilon} \leq  \limsup_{\varepsilon \to 0^+} \cJ_{\varepsilon} ( t_\varepsilon  u_0 ) \\
&\leq  \limsup_{\varepsilon \to 0^+} \Phi_{V_\infty} ( t_\varepsilon  u_0 ) +  \limsup_{\varepsilon \to 0^+}  \int_{\R^N} (V_\varepsilon (x)-V_\infty) t_\varepsilon^2 u_0^2 \, dx  \\
&= \limsup_{\varepsilon \to 0^+} \Phi_{V_\infty} ( t_\varepsilon u_0 ) \leq \Phi_{V_\infty} (u_0) = m_{V_\infty}.
\end{align*}
\end{proof}

\begin{Lem} \label{ineq2}
Suppose that (V1), (F1)--(F4) hold. Then $c_\varepsilon \geq m_{V_0}$.
\end{Lem}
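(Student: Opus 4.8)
The plan is to exploit the pointwise domination $V_\varepsilon(x)=V(\varepsilon x)\geq \inf V=V_0$ together with the fibering/Nehari characterization of both levels. Note first that $X_\varepsilon^{\cG(K)}\subset Y^{\cG(K)}$ as a set (membership in $X_\varepsilon$ forces $u\in H^1(\R^N)$ with $\int_{\R^N}u^2/|y|^2\,dx<\infty$), and that for every $u\in X_\varepsilon^{\cG(K)}\setminus\{0\}$ and every $t\geq 0$ one has
\[
\cJ_\varepsilon(tu)=\frac{t^2}{2}\int_{\R^N}|\nabla u|^2+\frac{u^2}{|y|^2}+V_\varepsilon(x)u^2\,dx-\int_{\R^N}F(tu)\,dx\ \geq\ \Phi_{V_0}(tu),
\]
since $V_\varepsilon\geq V_0$ pointwise. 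In particular $\Phi_{V_0}(tu)$ is finite, so the fibering analysis applies to $u$.

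Next I would fix $u\in\cN_\varepsilon$. Applying the Nehari lemma of Section~3 to the functional $\cJ_\varepsilon$ (the statement and proof go through verbatim with $V$ replaced by $V_\varepsilon$, using (F1)--(F4) and (V1)) gives $t_{V_\varepsilon}(u)=1$ and hence $\cJ_\varepsilon(u)=\max_{t\geq 0}\cJ_\varepsilon(tu)$. Likewise, applying the same lemma to $\Phi_{V_0}$ on $Y^{\cG(K)}$ yields a unique $t_{V_0}(u)>0$ with $t_{V_0}(u)u\in\cM_{V_0}$ and $\Phi_{V_0}(t_{V_0}(u)u)=\max_{t\geq 0}\Phi_{V_0}(tu)$.

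Combining these two facts with the pointwise inequality above,
\[
m_{V_0}\ \leq\ \Phi_{V_0}\bigl(t_{V_0}(u)u\bigr)\ =\ \max_{t\geq 0}\Phi_{V_0}(tu)\ \leq\ \max_{t\geq 0}\cJ_\varepsilon(tu)\ =\ \cJ_\varepsilon(u).
\]
Taking the infimum over $u\in\cN_\varepsilon$ gives $m_{V_0}\leq c_\varepsilon$, which is the claim. The only point requiring any care — and the closest thing to an obstacle — is justifying that the fiber map $t_{V_0}$ is well defined on the test functions coming from $X_\varepsilon^{\cG(K)}$; this is settled by the inclusion $X_\varepsilon^{\cG(K)}\subset Y^{\cG(K)}$ and the finiteness of $\Phi_{V_0}$ noted above, together with the standard monotonicity of $t\mapsto f(t)/t$ from (F4) and the growth/behaviour conditions (F1)--(F3) that make the Nehari lemma valid for $\Phi_{V_0}$.
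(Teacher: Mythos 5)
Your proof is correct and follows essentially the same chain of inequalities as the paper: project $u$ onto $\cM_{V_0}$ via $t_{V_0}(u)$, use $\Phi_{V_0}\leq\cJ_\varepsilon$ (from $V_\varepsilon\geq V_0$), and then use the Nehari max-over-rays property for $\cJ_\varepsilon$. The only difference is that the paper applies this chain directly to a minimizer $u_\varepsilon$ of $\cJ_\varepsilon$ on $\cN_\varepsilon$ (relying on attainment of $c_\varepsilon$), whereas you run it for arbitrary $u\in\cN_\varepsilon$ and take the infimum afterwards — a minor but genuine simplification, since it does not presuppose that the infimum is achieved.
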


\begin{proof}
Let $u_\varepsilon \in Y^{\cG(K)}$ be a weak solution to \eqref{eq:singular} such that $\cJ_\varepsilon (u_\varepsilon) = c_\varepsilon$. Then
$$
m_{V_0} = \inf_{\cM_{V_0}} \Phi_{V_0} \leq \Phi_{V_0} ( t_{V_0} (u_\varepsilon) u_\varepsilon ) \leq \cJ_\varepsilon ( t_{V_0} (u_\varepsilon) u_\varepsilon ) \leq \cJ_\varepsilon (u_\varepsilon) = c_\varepsilon.
$$
\end{proof}

In what follows, we will consider $(c_\varepsilon)$ and $(u_\varepsilon)$ as sequences, without writing $\varepsilon_n$, always passing to a subsequence with respect to $\varepsilon$ (if needed). 

\begin{Lem}
Suppose that (V1), (F1)--(F4) hold. The sequence $(u_\varepsilon)$ is bounded in $Y^{\cG(K)}$.
\end{Lem}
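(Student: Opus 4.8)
The plan is to use the standard Nehari-manifold boundedness argument, exploiting the coercivity furnished by (V1) together with the superquadratic condition (F3) and the monotonicity (F4). First I would recall that each $u_\eps \in \cN_\eps$ satisfies, by definition of the Nehari manifold,
\[
\|u_\eps\|_\eps^2 = \int_{\R^N} f(u_\eps) u_\eps \, dx,
\]
so that $\cJ_\eps(u_\eps) = \cJ_\eps(u_\eps) - \tfrac12 \cJ_\eps'(u_\eps) u_\eps = \int_{\R^N} \big( \tfrac12 f(u_\eps) u_\eps - F(u_\eps) \big)\, dx$. By (F4) the integrand $\tfrac12 f(t)t - F(t)$ is nonnegative, and by Lemma \ref{ineq2} combined with Lemma \ref{ineq1} the energy level is controlled: $c_\eps = \cJ_\eps(u_\eps) \leq m_{V_\infty} + o(1)$ as $\eps \to 0^+$, so in particular $(\cJ_\eps(u_\eps))$ is bounded. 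Hence $\int_{\R^N} \big( \tfrac12 f(u_\eps) u_\eps - F(u_\eps) \big)\, dx$ is bounded.

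Next I would argue by contradiction: suppose $\|u_\eps\|_\eps \to \infty$ along a subsequence. Set $v_\eps := u_\eps / \|u_\eps\|_\eps$, so $\|v_\eps\|_\eps = 1$; by the continuous embedding $X_\eps^{\cG(K)} \subset H^1_{\cG(K)}(\R^N)$ (which holds uniformly in $\eps$ thanks to the lower bound $\|u\|_\eps^2 \geq \int |\nabla u|^2 + V_0 u^2\, dx$), $(v_\eps)$ is bounded in $H^1(\R^N)$ and we may pass to a subsequence with $v_\eps \weakto v$ in $H^1_{\cG(K)}(\R^N)$ and $v_\eps \to v$ in $L^s_{\loc}(\R^N)$ for $2 \leq s < 2^*$. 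The Nehari identity gives
\[
1 = \int_{\R^N} \frac{f(u_\eps) u_\eps}{\|u_\eps\|_\eps^2}\, dx = \int_{\R^N} \frac{f(u_\eps)}{u_\eps} v_\eps^2 \, dx.
\]
If $v \equiv 0$, then a vanishing/Lions-type argument — either directly or via \cite[Corollary 3.32]{JM} as used elsewhere in the paper — together with the subcritical growth (F1), (F2) forces $\int_{\R^N} f(u_\eps)u_\eps / \|u_\eps\|_\eps^2\, dx \to 0$ (splitting into the region where $|u_\eps|$ is small, handled by (F2), and using $L^p$-smallness in the rest), contradicting that this integral equals $1$. Hence $v \not\equiv 0$, and on the set $\{v \neq 0\}$ we have $|u_\eps| \to \infty$ a.e., so by (F3),
\[
\int_{\R^N} \Big( \tfrac12 f(u_\eps) u_\eps - F(u_\eps) \Big) \, dx \geq \int_{\{v \neq 0\}} \frac{\tfrac12 f(u_\eps)u_\eps - F(u_\eps)}{u_\eps^2} \, u_\eps^2 \, dx \to \infty
\]
by Fatou's lemma — using that under (F4) one also has $\tfrac12 f(t)t - F(t) \gtrsim F(t)$ for $|t|$ large, hence the integrand over $u_\eps^2$ tends to $+\infty$ on $\{v\neq 0\}$. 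This contradicts the boundedness established in the first paragraph.

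The main obstacle is the dichotomy step: ruling out $v \equiv 0$. Unlike the classical Schrödinger case, one cannot invoke $L^\infty$-elliptic estimates because of the singular term $u^2/|y|^2$; instead I would lean on the $\cG(K)$-symmetry, which by the compactness results cited in the paper (the Strauss-type / concentration-compactness machinery of \cite{JM}, and the fact that $\cG(K)$-invariance kills translations in the $\R^K$ directions) reduces the loss of compactness to translations in $\R^{N-K}$ only. A localized version of the argument in the proof of Theorem \ref{nierownosc} applies: either $v_\eps$ vanishes, which contradicts the normalization via (F2), or there are translates $(0,z_\eps)$ concentrating mass, and then $v_\eps(\cdot - (0,z_\eps)) \weakto \tilde v \neq 0$ plays the role of $v$ above. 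One should also check that the translated potential $V_\eps(\cdot + (0, z_\eps))$ stays bounded below by $V_0$, which is immediate from (V1), so the energy estimate is unaffected. With these pieces in place the contradiction closes and $(u_\eps)$ is bounded in $Y^{\cG(K)}$.
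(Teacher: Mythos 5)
Your argument takes a genuinely different (and more robust) route than the paper's. The paper's own proof is a one-line absorption argument: starting from the Nehari identity $\|u_\eps\|_\eps^2=\int f(u_\eps)u_\eps\,dx$ and $(F1)$--$(F2)$, one gets $\|u_\eps\|_Y^2\lesssim\delta\|u_\eps\|_Y^2+C_\delta\|u_\eps\|_Y^p$. But after absorbing the $\delta$ term this yields $\|u_\eps\|_Y^{p-2}\gtrsim 1$ since $p>2$, i.e.\ a \emph{lower} bound, not the upper bound the lemma asserts. Your contradiction argument — normalize $v_\eps=u_\eps/\|u_\eps\|_\eps$, use $\cG(K)$-symmetry and \cite[Cor.\ 3.32]{JM} to rule out vanishing (via the Nehari identity and $(F1)$--$(F2)$), and then use $(F3)$ on the nontrivial profile — is the Szulkin--Weth style mechanism that one actually needs under the weak Nehari hypotheses $(F1)$--$(F4)$, and it uses the boundedness of the level $c_\eps$, which the paper's one-liner never invokes. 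One small bookkeeping point: you should not cite Lemma \ref{ineq1} for $c_\eps\le m_{V_\infty}+o(1)$, since that lemma assumes $(V3)$ and $f$ odd, which the present lemma does not; instead use that the existence proof in Section 5 already forces $c_\eps<m_{V_\infty}$ for small $\eps$.

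The one genuine imprecision is the claim that $(F4)$ gives $\tfrac12 f(t)t-F(t)\gtrsim F(t)$ for $|t|$ large. This is false in general: for $f(t)=2t\log(1+t^2)$ one computes $\tfrac12 f(t)t-F(t)=t^2-\log(1+t^2)$ while $F(t)\sim t^2\log t$, so the ratio tends to $0$. What you actually need — and what \emph{does} follow from $(F3)$ and $(F4)$ together — is that $h(t):=\tfrac12 f(t)t-F(t)\to\infty$ as $|t|\to\infty$. Indeed, $(F4)$ makes $h$ nonnegative and nondecreasing in $|t|$, and $\frac{d}{dt}\bigl(F(t)/t^2\bigr)=2h(t)/t^3$; if $h$ were bounded, integrating from $1$ to $t$ would make $F(t)/t^2$ bounded, contradicting $(F3)$. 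With this replacement, on the set $\{v\neq 0\}$ (after the translation) one has $|u_\eps|\to\infty$ a.e., hence $h(u_\eps)\to\infty$ a.e., and Fatou gives $\int h(u_\eps)\,dx\to\infty$, contradicting $\int h(u_\eps)\,dx=\cJ_\eps(u_\eps)=c_\eps\le m_{V_\infty}$. With that one fix your proof is correct and, unlike the paper's, actually closes.
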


\begin{proof}
Recall that $\cJ_\varepsilon '(u_\varepsilon)(u_\varepsilon) = 0$, thus
\begin{align*}
\| u_\varepsilon \|^2 &\lesssim \int_{\R^N} |\nabla u_\varepsilon |^2 + \frac{u_\varepsilon^2}{|y|^2} + V_0 u_\varepsilon^2 \, dx \leq \int_{\R^N} |\nabla u_\varepsilon |^2 + \frac{u_\varepsilon^2}{|y|^2} + V_\varepsilon(x) u_\varepsilon^2 \, dx = \int_{\R^N} f(u_\varepsilon) u_\varepsilon \, dx
\\
&\leq \delta |u_\varepsilon|_2^2 + C_\delta |u_\varepsilon|_p^p \lesssim \delta \|u\|_Y^2 + C_\delta \|u\|_Y^p.
\end{align*}
Choosing sufficiently small $\delta$ we obtain that $\|u_\varepsilon\|_Y \lesssim 1$.
\end{proof}

Using \cite[Theorem 4.7]{BMS} we obtain that there are $(\widetilde{u}_i) \subset Y^{\cG(K)}$, $(z_\varepsilon^i) \subset \R^{N-K}$ such that $z_\varepsilon^0 = 0$, $|z_\varepsilon^i - z_\varepsilon^j| \to \infty$ for $i \neq j$, and (passing to a subsequence)
\begin{align}
\nonumber
&u_\varepsilon (\cdot + (0, z_\varepsilon^i) ) \weakto \widetilde{u}_i \hbox{ in } Y^{\cG(K)}; \\
\label{splitting:norm}
&\lim_{\varepsilon \to 0^+}\int_{\R^N} |\nabla u_\varepsilon|^2 + \frac{u_\varepsilon^2}{|y|^2} \, dx  = \sum_{j=0}^i \int_{\R^N} |\nabla \widetilde{u}_j|^2 + \frac{\widetilde{u}_j^2}{|y|^2} \, dx + \lim_{\varepsilon \to 0^+} \int_{\R^N} |\nabla v_\varepsilon^i|^2 + \frac{(v_\varepsilon^i)^2}{|y|^2} \, dx,
\end{align}
where $v_\varepsilon^i := u_\varepsilon - \sum_{j=0}^i \widetilde{u}_j ( \cdot - (0, z_\varepsilon^j) )$ and
\begin{align}
\label{splitting:f}
&\limsup_{\varepsilon \to 0^+} \int_{\R^N} f(u_\varepsilon)u_\varepsilon \, dx = \sum_{j=0}^\infty \int_{\R^N} f(\widetilde{u}_j) \widetilde{u}_j \, dx, \\
\label{splitting:F}
&\limsup_{\varepsilon \to 0^+} \int_{\R^N} F(u_\varepsilon) \, dx = \sum_{j=0}^\infty \int_{\R^N} F(\widetilde{u}_j) \, dx.
\end{align}

\begin{Lem}\label{lem:split-crit-points}
Suppose that (V1), (V2), (F1)--(F4) hold. For every $i \geq 0$, either $\widetilde{u}_i$ is a critical point of $\Phi_{V(0,z_i)}$ for some $z_i \in \R^{N-K}$, or is a critical point of $\Phi_{V_\infty}$. Moreover, for $i = 0$, $z_i = 0$ and $\widetilde{u}_0$ is a critical point of $\Phi_{V(0)}$.
\end{Lem}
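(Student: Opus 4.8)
The plan is to fix $i\ge0$ and pass to the limit $\varepsilon\to0^+$ in the Euler--Lagrange equation satisfied by the translated functions $\widehat u_\varepsilon^i:=u_\varepsilon(\cdot+(0,z_\varepsilon^i))$. Since the translation by $(0,z_\varepsilon^i)$ acts only in the $z$-variable of $\R^N=\R^K\times\R^{N-K}$, it leaves the weight $|y|^{-2}$ unchanged and commutes with the $\cG(K)$-action; hence $\widehat u_\varepsilon^i\in Y^{\cG(K)}$, and, because $u_\varepsilon$ is a weak solution of \eqref{eq:singular} (by Palais' symmetric criticality one may test against all of $Y$), $\widehat u_\varepsilon^i$ is a weak solution of
\begin{equation*}
-\Delta\widehat u_\varepsilon^i+\frac{\widehat u_\varepsilon^i}{|y|^2}+\widehat V_\varepsilon^i(x)\,\widehat u_\varepsilon^i=f(\widehat u_\varepsilon^i),\qquad \widehat V_\varepsilon^i(x):=V\big(\varepsilon x+(0,\varepsilon z_\varepsilon^i)\big).
\end{equation*}
As $(u_\varepsilon)$ is bounded in $Y^{\cG(K)}$ and the $Y$-norm is invariant under translations in $z$, the sequence $(\widehat u_\varepsilon^i)$ is bounded in $Y^{\cG(K)}$; together with $\widehat u_\varepsilon^i\weakto\widetilde u_i$ and the Rellich--Kondrachov theorem this gives, along a subsequence, $\widehat u_\varepsilon^i\to\widetilde u_i$ in $L^s_{\loc}(\R^N)$ for $2\le s<2^*$ and a.e.\ on $\R^N$.

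Next I would identify the limiting constant $k$. Passing to a further subsequence (diagonally in $i$ if there are infinitely many nonzero profiles), either $\varepsilon z_\varepsilon^i\to z_i$ for some $z_i\in\R^{N-K}$, or $|\varepsilon z_\varepsilon^i|\to\infty$. In the first case $\varepsilon x+(0,\varepsilon z_\varepsilon^i)\to(0,z_i)$ uniformly for $x$ in any fixed compact set, so by continuity of $V$ we get $\widehat V_\varepsilon^i\to V(0,z_i)=:k$ uniformly on compacta; in the second case $|\varepsilon x+(0,\varepsilon z_\varepsilon^i)|\to\infty$ uniformly on compacta, so (V2) gives $\widehat V_\varepsilon^i\to V_\infty=:k$ uniformly on compacta. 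In either case $0\le\widehat V_\varepsilon^i\le\|V\|_\infty<\infty$ by (V1)--(V2). For $i\ge1$ one only knows $|z_\varepsilon^i|\to\infty$ (because $z_\varepsilon^0=0$ and $|z_\varepsilon^i-z_\varepsilon^j|\to\infty$), which does not decide which alternative holds; for $i=0$, however, $z_\varepsilon^0\equiv0$ forces the first case with $z_0=0$ and $k=V(0,0)=V(0)$.

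Finally I would fix $\varphi\in\cC_0^\infty\big(\R^N\setminus(\{0\}\times\R^{N-K})\big)$ and pass to the limit term by term in the weak formulation: $\int_{\R^N}\nabla\widehat u_\varepsilon^i\cdot\nabla\varphi\,dx\to\int_{\R^N}\nabla\widetilde u_i\cdot\nabla\varphi\,dx$ by weak $H^1$-convergence; $\int_{\R^N}\widehat u_\varepsilon^i\varphi/|y|^2\,dx\to\int_{\R^N}\widetilde u_i\varphi/|y|^2\,dx$ because $\varphi/|y|^2$ is bounded with compact support and $\widehat u_\varepsilon^i\to\widetilde u_i$ in $L^2_{\loc}$; writing $\widehat V_\varepsilon^i\widehat u_\varepsilon^i\varphi=(\widehat V_\varepsilon^i-k)\widehat u_\varepsilon^i\varphi+k\widehat u_\varepsilon^i\varphi$ and using $\|\widehat V_\varepsilon^i-k\|_{L^\infty(\supp\varphi)}\to0$ together with the $L^1_{\loc}$-convergence of $(\widehat u_\varepsilon^i)$ handles the potential term; and by (F1)--(F2) the Nemytskii operator $u\mapsto f(u)$ is continuous from $L^p_{\loc}(\R^N)$ to $L^{p/(p-1)}_{\loc}(\R^N)$, so $\int_{\R^N}f(\widehat u_\varepsilon^i)\varphi\,dx\to\int_{\R^N}f(\widetilde u_i)\varphi\,dx$. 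Hence $\widetilde u_i$ solves $-\Delta\widetilde u_i+\widetilde u_i/|y|^2+k\widetilde u_i=f(\widetilde u_i)$ in the distributional sense on $\R^N\setminus(\{0\}\times\R^{N-K})$; since $K\ge2$ this set has zero $H^1$-capacity, so $\cC_0^\infty(\R^N\setminus(\{0\}\times\R^{N-K}))$ is dense in $Y$, and as $\Phi_k'(\widetilde u_i)$ is a bounded functional on $Y$ vanishing on that dense subspace we conclude $\Phi_k'(\widetilde u_i)=0$ — i.e.\ $\widetilde u_i$ is a critical point of $\Phi_{V(0,z_i)}$ or of $\Phi_{V_\infty}$, and $\widetilde u_0$ of $\Phi_{V(0)}$. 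The one step that genuinely needs care is precisely this handling of the singularity $|y|^{-2}$ in the limit: localizing the test functions away from $\{y=0\}$ trivializes that term but forces the density step (available only because $K\ge2$); everything else — the $\widehat V_\varepsilon^i$-term, controlled by the compact support of $\varphi$ and locally uniform convergence, and the nonlinear term, controlled by the continuity of the Nemytskii operator on $L^p$ of bounded sets — is routine.
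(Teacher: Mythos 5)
Your proof is correct and reaches the same conclusion, but it takes a slightly different route from the paper. The paper tests $\cJ_\varepsilon'(u_\varepsilon)$ against (translates of) a general $\varphi\in Y^{\cG(K)}$; the bilinear part $\int\nabla u\nabla\varphi+u\varphi/|y|^2\,dx$ then passes to the limit directly by weak convergence in $Y$ (it is continuous linear in $u$ for fixed $\varphi\in Y$, with no need to localize away from the singularity), the nonlinear term is handled by compact embedding into $L^s_{\loc}$, and the potential term is treated via the Vitali convergence theorem. You instead restrict to $\varphi\in\cC_0^\infty(\R^N\setminus(\{0\}\times\R^{N-K}))$, so that $\varphi/|y|^2$ is bounded and all terms become routine, and then invoke density of this class in $Y$ to upgrade to $\Phi_k'(\widetilde u_i)=0$. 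That density step does hold (and is in fact slightly easier than the zero-capacity argument you cite: for $u\in Y$ the radial cutoff $\eta_\delta(y)=\eta(|y|/\delta)$ satisfies $\int u^2|\nabla\eta_\delta|^2\lesssim\int_{\delta\le|y|\le2\delta}u^2/|y|^2\to0$ precisely because $\int u^2/|y|^2<\infty$), but it is an extra step that the paper's direct weak-convergence argument avoids. On the other hand, your treatment of the potential term — locally uniform convergence of $\widehat V_\varepsilon^i$ on $\supp\varphi$ — is cleaner than the paper's appeal to Vitali. Both approaches are sound; the trade-off is localization-plus-density versus working in the full weighted space from the start.
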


\begin{proof}
Since $\cJ_\varepsilon '(u_\varepsilon) = 0$ and $u_\varepsilon (\cdot + (0, z_\varepsilon^i) ) \weakto \widetilde{u}_i$ we observe that
\begin{align*}
0 &= \cJ_\varepsilon'(u_\varepsilon) (\varphi (\cdot - (0, z_\varepsilon^i)))\\
&= \int_{\R^N} \nabla u_\varepsilon \nabla \varphi(\cdot - (0, z_\varepsilon^i)) + \frac{u_\varepsilon \varphi(\cdot - (0, z_\varepsilon^i))}{|y|^2} + V_\varepsilon(x) u_\varepsilon \varphi(\cdot - (0, z_\varepsilon^i)) \, dx \\
&\quad - \int_{\R^N} f(u_\varepsilon) \varphi(\cdot - (0, z_\varepsilon^i)) \, dx \\
&= \int_{\R^N} \nabla u_\varepsilon (\cdot + (0, z_\varepsilon^i)) \nabla \varphi + \frac{u_\varepsilon (\cdot + (0, z_\varepsilon^i)) \varphi}{|y|^2} + V_\varepsilon(\cdot + (0, z_\varepsilon^i)) u_\varepsilon (\cdot + (0, z_\varepsilon^i)) \varphi \, dx\\
&\quad - \int_{\R^N} f(u_\varepsilon(\cdot + (0, z_\varepsilon^i))) \varphi \, dx.
\end{align*}
Weak convergence and compact embeddings $Y^{\cG(K)} \subset L^s_{\mathrm{loc}}(\R^N)$, $2 \leq s < 2^*$ imply that
\begin{multline*}
\int_{\R^N} \nabla u_\varepsilon (\cdot + (0, z_\varepsilon^i)) \nabla \varphi + \frac{u_\varepsilon (\cdot + (0, z_\varepsilon^i)) \varphi}{|y|^2} \, dx - \int_{\R^N} f(u_\varepsilon(\cdot + (0, z_\varepsilon^i))) \varphi \, dx \\
\to \int_{\R^N} \nabla \widetilde{u}_j \nabla \varphi + \frac{\widetilde{u}_j \varphi}{|y|^2} \, dx - \int_{\R^N} f(\widetilde{u}_j) \varphi \, dx.
\end{multline*}
Now we consider
$$
\int_{\R^N} V_\varepsilon(\cdot + (0, z_\varepsilon^i)) u_\varepsilon (\cdot + (0, z_\varepsilon^i)) \varphi \, dx = \int_{\R^N} V(\varepsilon x+ (0, \varepsilon z_\varepsilon^i)) u_\varepsilon (\cdot + (0, z_\varepsilon^i)) \varphi \, dx.
$$
If $\limsup_{\varepsilon \to 0^+} |\varepsilon z_\varepsilon^i| < \infty$, we may assume that $\varepsilon z_\varepsilon^i \to z_i$ for some $z_i \in \R^{N-K}$. Hence
$$
V(\varepsilon x+ (0, \varepsilon z_\varepsilon^i)) u_\varepsilon (x + (0, z_\varepsilon^i)) \varphi(x) \to V(0, z_i) \widetilde{u}_i(x) \varphi(x) \mbox{ for a.e. } x \in \R^N.
$$
Thanks to (V1) and (V2), $V \in L^\infty (\R^N)$, and for any measurable $E \subset \supp \varphi$ we get
$$
\left| \int_{\R^N} V(\varepsilon x+ (0, \varepsilon z_\varepsilon^i)) u_\varepsilon (\cdot + (0, z_\varepsilon^i)) \varphi \, dx \right| \leq |V|_\infty |u_\varepsilon|_2 |\varphi \chi_E|_2 \lesssim |\varphi \chi_E|_2^2.
$$
From Vitali convergence theorem we get
$$
\int_{\R^N} V(\varepsilon x+ (0, \varepsilon z_\varepsilon^i)) u_\varepsilon (\cdot + (0, z_\varepsilon^i)) \varphi \, dx \to \int_{\R^N} V(0, z_i) \widetilde{u}_i \varphi \,dx
$$
and $\Phi_{V(0,z_i)} '(\widetilde{u}_i)(\varphi) = 0$. Suppose now that $(\varepsilon z_\varepsilon^i)$ is unbounded. Up to a subsequence, we assume that $|\varepsilon z_\varepsilon^i| \to \infty$. Then
$$
V(\varepsilon x+ (0, \varepsilon z_\varepsilon^i)) u_\varepsilon (x + (0, z_\varepsilon^i)) \varphi(x) \to V_\infty \widetilde{u}_i(x) \varphi(x) \mbox{ for a.e. } x \in \R^N
$$
and the same reasoning shows that $\Phi_{V_\infty}' (\widetilde{u}_i)(\varphi) = 0$. The statement for $i = 0$ follows simply from the fact that $z_\varepsilon^0 = 0$.
\end{proof}

We will show that only finite number of $\widetilde{u}_i$ is nonzero and at least one of them is nonzero. For this purpose we put $I := \{ i \ : \ \widetilde{u}_i \neq 0 \}$.

\begin{Lem}
Suppose that (V1), (V2), (F1)--(F4) hold. There holds $I \neq \emptyset$ and $\# I < \infty$. 
\end{Lem}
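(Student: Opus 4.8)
The plan is to read off the asymptotic energy of $(u_\varepsilon)$ from the splitting relations \eqref{splitting:f}--\eqref{splitting:F}, to observe that every nontrivial profile $\widetilde u_i$ carries at least the fixed amount of energy $m_{V_0}>0$, and then to compare this with the total energy, which stays bounded.

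For $I\ne\emptyset$ I would argue by contradiction: if all profiles vanished, \eqref{splitting:f} would give $\limsup_{\varepsilon\to0^+}\int_{\R^N}f(u_\varepsilon)u_\varepsilon\,dx=0$. But $u_\varepsilon\in\cN_\varepsilon$ means $\int_{\R^N}f(u_\varepsilon)u_\varepsilon\,dx=\|u_\varepsilon\|_\varepsilon^2$, and by (F1)--(F2) together with the $\varepsilon$-uniform inequality $\|u\|_{H^1}^2\lesssim\|u\|_\varepsilon^2$ (the constant depending only on $V_0$) the manifolds $\cN_\varepsilon$ are bounded away from $0$ uniformly in $\varepsilon$; hence $\int_{\R^N}f(u_\varepsilon)u_\varepsilon\,dx\ge\rho^2>0$ for all small $\varepsilon$ — a contradiction. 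So $I\ne\emptyset$.

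For $\#I<\infty$ I would use that, by Lemma~\ref{lem:split-crit-points}, each $\widetilde u_i$ with $i\in I$ is a nontrivial critical point of $\Phi_{k_i}$, where $k_i=V_\infty$ or $k_i=V(0,z_i)$ for some $z_i\in\R^{N-K}$; in either case $k_i\ge V_0$ by (V1). Consequently $\widetilde u_i\in\cM_{k_i}$, $\Phi_{k_i}'(\widetilde u_i)(\widetilde u_i)=0$, so that
\[
\int_{\R^N}\Big(\tfrac12 f(\widetilde u_i)\widetilde u_i-F(\widetilde u_i)\Big)\,dx=\Phi_{k_i}(\widetilde u_i)-\tfrac12\Phi_{k_i}'(\widetilde u_i)(\widetilde u_i)=\Phi_{k_i}(\widetilde u_i)\ \ge\ m_{k_i}\ \ge\ m_{V_0}>0 ,
\]
where the monotonicity $k\mapsto m_k$ follows as in the proof of Lemma~\ref{lem:c-tilde} (for $u\in\cM_{k_i}$ one has $m_{V_0}\le\max_{t\ge0}\Phi_{V_0}(tu)\le\max_{t\ge0}\Phi_{k_i}(tu)=\Phi_{k_i}(u)$) and $m_{V_0}>0$ is the standard positivity of the ground-state level under (F1)--(F4), cf.\ \cite{SzulkinWeth}. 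Since $\widetilde u_j=0$ for $j\notin I$, summing over $i\in I$ and invoking \eqref{splitting:f}, \eqref{splitting:F} (the series converging absolutely because $f(u)u\ge0$ and $F(u)\ge0$) yields
\[
(\#I)\,m_{V_0}\ \le\ \sum_{i\in I}\Phi_{k_i}(\widetilde u_i)\ =\ \tfrac12\limsup_{\varepsilon\to0^+}\int_{\R^N}f(u_\varepsilon)u_\varepsilon\,dx-\limsup_{\varepsilon\to0^+}\int_{\R^N}F(u_\varepsilon)\,dx\ <\ \infty ,
\]
the right-hand side being finite since $(u_\varepsilon)$ is bounded in $Y^{\cG(K)}$ and $f$ is subcritical by (F1). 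Hence $\#I<\infty$.

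The only slightly delicate ingredients are the $\varepsilon$-uniform lower bound on $\cN_\varepsilon$ and the positivity and monotonicity of the levels $m_k$; both are routine consequences of (F1)--(F4) and (V1), parallel to the arguments of Sections~3--4. I note that this crude counting already gives finiteness of $I$; the sharper bound $\#I\le m_{V_\infty}/m_{V_0}$ appearing in Theorem~\ref{thm:main2}(b) would additionally require Lemma~\ref{ineq1} and passing to a subsequence along which the two $\limsup$'s in \eqref{splitting:f}, \eqref{splitting:F} are attained simultaneously, so that the energy identity $\lim_{\varepsilon\to0^+}c_\varepsilon=\sum_{i\in I}\Phi_{k_i}(\widetilde u_i)$ becomes available.
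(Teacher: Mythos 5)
Your argument for $I\neq\emptyset$ is the same as the paper's: a uniform-in-$\varepsilon$ lower bound on $\inf_{\cN_\varepsilon}\|\cdot\|_Y$ combined with \eqref{splitting:f}. For $\#I<\infty$ you take a modestly different route. The paper estimates $\sum_{j\in I}\|\widetilde u_j\|_Y^2$ from below by $(\#I)\cdot\inf_{\cM_{k}}\|\cdot\|_Y^2$ and shows the latter infimum is $\gtrsim 1$ uniformly over the admissible range $V_0\le k\le|V|_\infty$, comparing this with the bounded left-hand side of \eqref{splitting:f}. You instead work at the level of energies: each nontrivial profile is a critical point of some $\Phi_{k_i}$, so by the free-energy identity $\Phi_{k_i}(\widetilde u_i)=\int_{\R^N}\frac12 f(\widetilde u_i)\widetilde u_i-F(\widetilde u_i)\,dx\ge m_{k_i}\ge m_{V_0}>0$, using monotonicity of $k\mapsto m_k$ (argued exactly as in Lemma~\ref{lem:c-tilde}) and the standard positivity of Nehari ground-state levels. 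Both routes rest on the same uniformity over $k$ and are essentially interchangeable; yours has the advantage that the inequality $\Phi_{k_i}(\widetilde u_i)\ge m_{k_i}\ge m_{V_0}$ is precisely what is re-used later in \eqref{c-ineq} to obtain the sharper bound $\#I\le m_{V_\infty}/m_{V_0}$, so the argument dovetails nicely with Lemma~\ref{lem:decomp_final}. A tiny presentational point: the displayed equation
\[
\sum_{i\in I}\Phi_{k_i}(\widetilde u_i)=\tfrac12\limsup_{\varepsilon\to0^+}\int f(u_\varepsilon)u_\varepsilon\,dx-\limsup_{\varepsilon\to0^+}\int F(u_\varepsilon)\,dx
\]
requires that both series in \eqref{splitting:f}--\eqref{splitting:F} converge (which they do, and each term $\int(\tfrac12 f(\widetilde u_i)\widetilde u_i-F(\widetilde u_i))\,dx$ is nonnegative by (F4), so the rearrangement is legitimate); it would be slightly cleaner to avoid the $\infty-\infty$ issue by dropping the $F$-term and writing directly $\sum_{i\in I}\Phi_{k_i}(\widetilde u_i)\le\tfrac12\limsup_{\varepsilon\to0^+}\int f(u_\varepsilon)u_\varepsilon\,dx<\infty$.
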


\begin{proof}
We start by showing that $\# I < \infty$. Note that, using \eqref{splitting:f}, we get
\begin{align*}
&\quad 1 \gtrsim \limsup_{\varepsilon \to 0^+} \|u_\varepsilon\|_Y^2 \gtrsim  \limsup_{\varepsilon \to 0^+} \int_{\R^N} |\nabla u_\varepsilon|^2 + \frac{u_\varepsilon^2}{|y|^2} + V_\varepsilon(x) u_\varepsilon^2 \, dx = \limsup_{\varepsilon \to 0^+} \int_{\R^N} f(u_\varepsilon)u_\varepsilon \, dx \\
&=  \sum_{j \in I} f(\widetilde{u}_j) \widetilde{u_j} = \sum_{j \in I} \int_{\R^N} |\nabla \widetilde{u}_j|^2 + \frac{\widetilde{u}_j^2}{|y|^2} + k_j \widetilde{u}_j^2 \, dx \geq \sum_{j \in I} \int_{\R^N} |\nabla \widetilde{u}_j|^2 + \frac{\widetilde{u}_j^2}{|y|^2} + V_0 \widetilde{u}_j^2 \, dx \\
&\gtrsim \sum_{j \in I} \| \widetilde{u}_j \|_Y^2 \geq \sum_{j \in I} \left( \inf_{\cM_{k_j}} \| \cdot \|_Y^2 \right),
\end{align*}
where $k_j = V(0, z_j)$ for some $z_j \in \R^{N-K}$ or $k_j =V_\infty$. We claim that 
$$
\inf_{\cM_{k_j}} \| \cdot \|_Y \gtrsim 1.
$$
Indeed, note that for $u \in \cM_{k_j}$ we get
$$
\|u\|_Y^2 \lesssim \int_{\R^N} |\nabla u|^2 + \frac{u^2}{|y|^2} + k_j u^2 \, dx = \int_{\R^N} f(u)u \, dx \lesssim \varepsilon \|u\|_Y^2 + C_\varepsilon \|u\|_Y^p,
$$
where the estimates are independend on $k_j$, because $V_0 \leq k_j \leq |V|_\infty$. Hence the set $I$ must be finite; otherwise $\sum_{j \in I} \left( \inf_{\cM_{k_j}} \| \cdot \|_Y \right) = \infty$.  

Now we need to show that $I \neq \emptyset$. Suppose by contradiction that $I = \emptyset$. Then $\int_{\R^N} f(u_\varepsilon) u_\varepsilon \, dx \to 0$, but it contradicts the inequality
$$
\inf_{\cN_\varepsilon} \|\cdot\|_Y \leq \|u_\varepsilon\|_Y \lesssim \left( \int_{\R^N} f(u_\varepsilon) u_\varepsilon \, dx \right)^{1/2},
$$
since, as above, we can show that $\inf_{\cN_\varepsilon} \|\cdot\|_Y \gtrsim 1$. Hence $I \neq \emptyset$.
\end{proof}

Since we already know that $I$ is finite, we get (cf. \cite[formula (1.11)]{JM})
\begin{equation}\label{Lp-convergence}
\left| u_\varepsilon - \sum_{j \in I} \widetilde{u}_j (\cdot - (0,z_\varepsilon^j)) \right|_p \to 0 \quad \mbox{as } \varepsilon \to 0^+.
\end{equation}

\begin{Lem}\label{lem:decomp_final}
Suppose that (V1)--(V3), (F1)--(F4) hold and $f$ is odd. There holds
\begin{align*}
&\left\| u_\varepsilon - \sum_{j \in I} \widetilde{u}_j (\cdot - (0,z_\varepsilon^j)) \right\|_Y \to 0, \\
&\cJ_\varepsilon (u_\varepsilon) \to \sum_{j \in I} \Phi_{k_j} (\widetilde{u}_j),
\end{align*}
and
$$
\# I \leq \frac{m_{V_\infty}}{m_{V_0}}.
$$
\end{Lem}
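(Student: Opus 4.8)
The plan is to extract the three claims from the profile decomposition \eqref{splitting:norm}--\eqref{splitting:F}, from the Nehari structure, from Lemma \ref{lem:split-crit-points} (which identifies the limit equations), and from Lemma \ref{ineq1}. Throughout I would pass to a further subsequence, without relabelling, so that $\|u_\varepsilon\|_\varepsilon^2$, $\int_{\R^N}f(u_\varepsilon)u_\varepsilon\,dx$, $\int_{\R^N}F(u_\varepsilon)\,dx$, $\int_{\R^N}|\nabla v_\varepsilon|^2+v_\varepsilon^2/|y|^2\,dx$ and $\int_{\R^N}V_\varepsilon v_\varepsilon^2\,dx$ all converge, where $v_\varepsilon:=u_\varepsilon-\sum_{j\in I}\widetilde u_j(\cdot-(0,z_\varepsilon^j))$; since $I$ is finite, $v_\varepsilon=v_\varepsilon^i$ whenever $i\ge\max I$, so \eqref{splitting:norm} applies to $v_\varepsilon$. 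Recall also that $u_\varepsilon\in\cN_\varepsilon$ is a critical point of $\cJ_\varepsilon$ with $\cJ_\varepsilon(u_\varepsilon)=c_\varepsilon$, and that, by Lemma \ref{lem:split-crit-points}, each $\widetilde u_j$ with $j\in I$ is a nontrivial critical point of $\Phi_{k_j}$ for some $k_j\in\{V(0,z_j),V_\infty\}$, in particular $\widetilde u_j\in\cM_{k_j}$.

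\emph{Energy identity.} First I would prove the second assertion. Since $\cJ_\varepsilon'(u_\varepsilon)(u_\varepsilon)=0$, the quadratic part cancels: $\cJ_\varepsilon(u_\varepsilon)=\cJ_\varepsilon(u_\varepsilon)-\tfrac12\cJ_\varepsilon'(u_\varepsilon)(u_\varepsilon)=\int_{\R^N}\big(\tfrac12 f(u_\varepsilon)u_\varepsilon-F(u_\varepsilon)\big)\,dx$, and by \eqref{splitting:f}--\eqref{splitting:F} this converges to $\sum_{j\in I}\int_{\R^N}\big(\tfrac12 f(\widetilde u_j)\widetilde u_j-F(\widetilde u_j)\big)\,dx$. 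Because $\Phi_{k_j}'(\widetilde u_j)(\widetilde u_j)=0$, the same cancellation gives $\int_{\R^N}\big(\tfrac12 f(\widetilde u_j)\widetilde u_j-F(\widetilde u_j)\big)\,dx=\Phi_{k_j}(\widetilde u_j)$, whence $\cJ_\varepsilon(u_\varepsilon)\to\sum_{j\in I}\Phi_{k_j}(\widetilde u_j)$.

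\emph{Norm convergence.} Here I would compute $\lim\|u_\varepsilon\|_\varepsilon^2$ in two ways. Testing the equation with $u_\varepsilon$ and using \eqref{splitting:f} together with the Nehari identity for each $\widetilde u_j$, one gets $\|u_\varepsilon\|_\varepsilon^2=\int_{\R^N}f(u_\varepsilon)u_\varepsilon\,dx\to\sum_{j\in I}\int_{\R^N}\big(|\nabla\widetilde u_j|^2+\tfrac{\widetilde u_j^2}{|y|^2}+k_j\widetilde u_j^2\big)\,dx$. On the other hand $\|u_\varepsilon\|_\varepsilon^2=\int_{\R^N}\big(|\nabla u_\varepsilon|^2+\tfrac{u_\varepsilon^2}{|y|^2}\big)\,dx+\int_{\R^N}V_\varepsilon u_\varepsilon^2\,dx$; the first integral is handled by \eqref{splitting:norm}, and for the potential term I would expand $u_\varepsilon=\sum_{j\in I}\widetilde u_j(\cdot-(0,z_\varepsilon^j))+v_\varepsilon$ and check three things. (i) The cross terms $\int_{\R^N}V_\varepsilon\,\widetilde u_i(\cdot-(0,z_\varepsilon^i))\,\widetilde u_j(\cdot-(0,z_\varepsilon^j))\,dx$ with $i\ne j$ vanish, since $V\in L^\infty$ and $|z_\varepsilon^i-z_\varepsilon^j|\to\infty$ (truncate $\widetilde u_i$ and use vanishing of $L^2$-tails). (ii) $\int_{\R^N}V_\varepsilon\,v_\varepsilon\,\widetilde u_j(\cdot-(0,z_\varepsilon^j))\,dx\to0$, because $v_\varepsilon(\cdot+(0,z_\varepsilon^j))\weakto0$ in $Y^{\cG(K)}$ (the remaining profiles translate to infinity), hence $v_\varepsilon(\cdot+(0,z_\varepsilon^j))\to0$ in $L^2_{\loc}$ by the compact embedding, while $V_\varepsilon$ is bounded and $\widetilde u_j\in L^2$ — truncate again for the tail. (iii) $\int_{\R^N}V_\varepsilon\,\widetilde u_j(\cdot-(0,z_\varepsilon^j))^2\,dx=\int_{\R^N}V\big(\varepsilon x+(0,\varepsilon z_\varepsilon^j)\big)\widetilde u_j(x)^2\,dx\to k_j|\widetilde u_j|_2^2$ by dominated convergence: for fixed $x$ the argument tends to $(0,z_j)$ when $\varepsilon z_\varepsilon^j\to z_j$ (and then $k_j=V(0,z_j)$ by definition), and leaves every compact set when $|\varepsilon z_\varepsilon^j|\to\infty$ (and then $k_j=V_\infty$, invoking (V2)). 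Adding (i)--(iii), $\int_{\R^N}V_\varepsilon u_\varepsilon^2\,dx\to\sum_{j\in I}k_j|\widetilde u_j|_2^2+\lim_{\varepsilon\to0^+}\int_{\R^N}V_\varepsilon v_\varepsilon^2\,dx$. Equating the two limits of $\|u_\varepsilon\|_\varepsilon^2$ and cancelling the common term $\sum_{j\in I}\int_{\R^N}\big(|\nabla\widetilde u_j|^2+\tfrac{\widetilde u_j^2}{|y|^2}+k_j\widetilde u_j^2\big)\,dx$ yields $\lim_{\varepsilon\to0^+}\int_{\R^N}\big(|\nabla v_\varepsilon|^2+\tfrac{v_\varepsilon^2}{|y|^2}+V_\varepsilon v_\varepsilon^2\big)\,dx=0$. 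Under (V1) the three integrands are nonnegative, so both $\int_{\R^N}|\nabla v_\varepsilon|^2+v_\varepsilon^2/|y|^2\,dx$ and $\int_{\R^N}V_\varepsilon v_\varepsilon^2\,dx$ tend to $0$; as $\int_{\R^N}v_\varepsilon^2\,dx\le V_0^{-1}\int_{\R^N}V_\varepsilon v_\varepsilon^2\,dx\to0$, we conclude $\|v_\varepsilon\|_Y\to0$.

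\emph{Counting bound.} Finally, $\widetilde u_j\in\cM_{k_j}$ forces $\Phi_{k_j}(\widetilde u_j)\ge m_{k_j}$ for each $j\in I$. Since $k_j\ge V_0$ by (V1) and $k\mapsto m_k$ is nondecreasing — the Nehari-rescaling argument in the proof of Lemma \ref{lem:c-tilde} applies to the functionals $\Phi_k$ — we get $m_{k_j}\ge m_{V_0}$, and $m_{V_0}>0$ since by Theorem \ref{thm:limit} it is attained at a nonzero critical point $u_{V_0}$ of $\Phi_{V_0}$, for which $\Phi_{V_0}(u_{V_0})=\int_{\R^N}\big(\tfrac12 f(u_{V_0})u_{V_0}-F(u_{V_0})\big)\,dx>0$ by (F4). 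Combining this with the energy identity and with Lemma \ref{ineq1} (the limits below being along the chosen subsequence, on which $\cJ_\varepsilon(u_\varepsilon)=c_\varepsilon$),
\[
(\#I)\,m_{V_0}\ \le\ \sum_{j\in I}m_{k_j}\ \le\ \sum_{j\in I}\Phi_{k_j}(\widetilde u_j)\ =\ \lim_{\varepsilon\to0^+}\cJ_\varepsilon(u_\varepsilon)\ \le\ \limsup_{\varepsilon\to0^+}c_\varepsilon\ \le\ m_{V_\infty},
\]
hence $\#I\le m_{V_\infty}/m_{V_0}$. The main obstacle is the norm splitting: unlike the gradient and weighted terms, the potential term $\int_{\R^N}V_\varepsilon u_\varepsilon^2\,dx$ is not covered by \eqref{splitting:norm} and must be split by hand, matching the slow scaling $V_\varepsilon=V(\varepsilon\,\cdot)$ against the concentration profiles $\widetilde u_j$ and distinguishing whether each $\varepsilon z_\varepsilon^j$ stays bounded or escapes to infinity.
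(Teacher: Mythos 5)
Your proof is correct, and the route to the two main limits is genuinely different from the paper's. The paper proves $\|v_\varepsilon\|_Y\to0$ by directly bounding $\|v_\varepsilon\|_Y^2\lesssim\langle v_\varepsilon,v_\varepsilon\rangle_{V_\varepsilon}$, expanding the right-hand side with the tested identities $\cJ_\varepsilon'(u_\varepsilon)(v_\varepsilon)=0$ and $\Phi_{k_j}'(\widetilde u_j)(v_\varepsilon(\cdot+(0,z_\varepsilon^j)))=0$, and killing each resulting term with \eqref{Lp-convergence}, the H\"older inequality and Vitali's theorem; it then deduces the energy convergence from \eqref{splitting:norm}, \eqref{splitting:F} and the already-established $\|v_\varepsilon\|_Y\to0$. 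You instead obtain the energy identity first and for free, purely from the Nehari cancellation $\cJ_\varepsilon(u_\varepsilon)=\int(\tfrac12 f(u_\varepsilon)u_\varepsilon-F(u_\varepsilon))$ plus \eqref{splitting:f}--\eqref{splitting:F}, and then get $\|v_\varepsilon\|_Y\to0$ by equating two computations of $\lim\|u_\varepsilon\|_\varepsilon^2$ (Nehari identity/\eqref{splitting:f} on one side, \eqref{splitting:norm} plus the hand-split potential term on the other) and exploiting nonnegativity of the three summands under (V1). The technical core — splitting $\int V_\varepsilon u_\varepsilon^2\,dx$ by distinguishing $\varepsilon z_\varepsilon^j\to z_j$ from $|\varepsilon z_\varepsilon^j|\to\infty$ and using Vitali/dominated convergence with truncation — is shared with the paper, just applied to all cross terms at once rather than after $v_\varepsilon\to0$ is known. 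Your version is arguably a little cleaner because it avoids testing $\cJ_\varepsilon'$ and $\Phi_{k_j}'$ against $v_\varepsilon$; the paper's version avoids having to prove separately that $v_\varepsilon(\cdot+(0,z_\varepsilon^j))\weakto0$ and to track the $V_\varepsilon$-cross-term with $v_\varepsilon$ (your item (ii)). You also make explicit the small fact, left implicit in the paper, that $m_{V_0}>0$ (via (F4) and Theorem \ref{thm:limit}), which is needed for the counting bound $\#I\le m_{V_\infty}/m_{V_0}$ to be meaningful; the final chain of inequalities is identical to the paper's \eqref{c-ineq}.
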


\begin{proof}
Put
$$
v_\varepsilon := u_\varepsilon - \sum_{j \in I} \widetilde{u}_j (\cdot - (0,z_\varepsilon^j)).
$$
It is clear the $(v_\varepsilon)$ is bounded in $Y^{\cG(K)}$. Moreover
\begin{align*}
&\cJ_\varepsilon ' (u_\varepsilon) (v_\varepsilon) = 0, \\
&\Phi_{k_j} ' (\widetilde{u}_j) (v_\varepsilon(\cdot + (0,z_\varepsilon^j)) = 0.
\end{align*}
Then
\begin{align*}
\| v_\varepsilon \|_Y^2 &\lesssim  \int_{\R^N} |\nabla v_\varepsilon|^2 + \frac{v_\varepsilon^2}{|y|^2} + V_\varepsilon (x) v_\varepsilon^2 \, dx =: \langle v_\varepsilon, v_\varepsilon \rangle_{V_\varepsilon} = \int_{\R^N} f(u_\varepsilon) v_\varepsilon \, dx + \langle v_\varepsilon - u_\varepsilon, v_\varepsilon \rangle_{V_{\varepsilon}} \\
&= \int_{\R^N} f(u_\varepsilon) v_\varepsilon \, dx + \sum_{j \in I} \langle \widetilde{u}_j (\cdot - (0, z_\varepsilon^j)), v_\varepsilon \rangle_{V_{\varepsilon}}.
\end{align*}
Observe that
\begin{align*}
\langle \widetilde{u}_j (\cdot - (0, z_\varepsilon^j)), v_\varepsilon \rangle_{V_{\varepsilon}} &= - \int_{\R^N} f(\widetilde{u}_j (\cdot - (0,z_\varepsilon^j))) v_\varepsilon \, dx - \Phi_{k_j} ' (\widetilde{u}_j) (v_\varepsilon (\cdot + (0,z_\varepsilon^j))) \\
&\quad - \int_{\R^N} (k_j - V_\varepsilon(x)) \widetilde{u}_j (\cdot - (0,z_\varepsilon^j)) v_\varepsilon \, dx.
\end{align*}
Then, from the H\"older inequality, (F1), (F2) and \eqref{Lp-convergence},
\begin{align*}
\limsup_{\varepsilon \to 0^+} \left| \int_{\R^N} f(\widetilde{u}_j (\cdot - (0,z_\varepsilon^j))) v_\varepsilon \, dx \right|  &\lesssim \limsup_{\varepsilon \to 0^+} \left( \delta |\widetilde{u}_j|_2 |v_\varepsilon|_2 + C_\delta |\widetilde{u}_j|_p^{p-1} |v_\varepsilon|_p \right) \\
&\lesssim \limsup_{\varepsilon \to 0^+} \left( \delta \|v_\varepsilon\|_Y^2 + C_\delta |v_\varepsilon|_p \right) = \limsup_{\varepsilon \to 0^+} \delta \|v_\varepsilon\|_Y^2 \lesssim \delta
\end{align*}
for any $\delta > 0$, and therefore 
$$
\int_{\R^N} f(\widetilde{u}_j (\cdot - (0,z_\varepsilon^j))) v_\varepsilon \, dx \to 0.
$$
Moreover, as in the proof of Lemma \ref{lem:split-crit-points},
$$
\int_{\R^N} (k_j - V_\varepsilon(x)) \widetilde{u}_j (\cdot - (0,z_\varepsilon^j)) v_\varepsilon \, dx = \int_{\R^N} (k_j - V(\varepsilon x + (0,\varepsilon z_\varepsilon^j))) \widetilde{u}_j  v_\varepsilon (\cdot + (0,z_\varepsilon^j)) \, dx \to 0.
$$
Hence 
$$
\| v_\varepsilon \|^2 \lesssim \int_{\R^N} f(u_\varepsilon) v_\varepsilon \, dx + o(1).
$$
As before, using \eqref{Lp-convergence}, we get also that
$$
\int_{\R^N} f(u_\varepsilon) v_\varepsilon \, dx \to 0
$$
and $\| v_\varepsilon \| \to 0$. To complete the proof, taking into account \eqref{splitting:F}, it is enough to show that
$$
\int_{\R^N} |u_\varepsilon|^2 + \frac{u_\varepsilon^2}{|y|^2} + V_\varepsilon(x) u_\varepsilon^2 \, dx \to \sum_{j \in I} \int_{\R^N} |\widetilde{u}_j|^2 + \frac{\widetilde{u}_j^2}{|y|^2} + k_j \widetilde{u}_j^2 \, dx.
$$
Note that \eqref{splitting:norm} implies that
\begin{align*}
\lim_{\varepsilon \to 0^+} \int_{\R^N} |u_\varepsilon|^2 + \frac{u_\varepsilon^2}{|y|^2} \, dx = \sum_{j \in I} \int_{\R^N} |\widetilde{u}_j|^2 + \frac{\widetilde{u}_j^2}{|y|^2} \, dx + \underbrace{\lim_{\varepsilon \to 0^+} \int_{\R^N} |v_\varepsilon|^2 + \frac{v_\varepsilon^2}{|y|^2} \, dx}_{=0},
\end{align*}
hence we only need to show that
$$
\int_{\R^N} V_\varepsilon(x) u_\varepsilon^2 \, dx \to \sum_{j \in I} \int_{\R^N} k_j \widetilde{u}_j^2 \, dx.
$$
For this purpose, we note first that $\| v_\varepsilon \|_Y \to 0$ implies then that
$$
\int_{\R^N} V_\varepsilon(x) \left( u_\varepsilon - \sum_{j \in I} \widetilde{u}_j (\cdot - (0,z_\varepsilon^j)) \right)^2 \, dx \to 0.
$$
Hence
\begin{align*}
\int_{\R^N} V_\varepsilon(x) u_\varepsilon^2 \, dx &= 2 \sum_{j \in I} \int_{\R^N} V_\varepsilon(x) u_\varepsilon \widetilde{u}_j (\cdot - (0,z_\varepsilon^j)) \, dx - \sum_{i \neq j} \int_{\R^N} V_\varepsilon(x) \widetilde{u}_i (\cdot - (0,z_\varepsilon^i)) \widetilde{u}_j (\cdot - (0,z_\varepsilon^j)) \, dx \\
&\quad - \sum_{j \in I} \int_{\R^N} V_\varepsilon(x) | \widetilde{u}_j (\cdot - (0,z_\varepsilon^j)) |^2 \, dx + o(1).
\end{align*}
Note that for $i \neq j$ we have $|z_\varepsilon^j - z_\varepsilon^i| \to \infty$ and
$$
\left| \int_{\R^N} V_\varepsilon(x) \widetilde{u}_i (\cdot - (0,z_\varepsilon^i)) \widetilde{u}_j (\cdot - (0,z_\varepsilon^j)) \, dx \right| \lesssim \int_{\R^N}| \widetilde{u}_i \widetilde{u}_j (\cdot - (0,z_\varepsilon^j-z_\varepsilon^i)) | \, dx \to 0
$$
from Vitali convergence theorem. Then, similarly as in Lemma \ref{lem:split-crit-points},
\begin{align*}
&\int_{\R^N} V_\varepsilon(x) u_\varepsilon \widetilde{u}_j (\cdot - (0,z_\varepsilon^j)) \, dx = \int_{\R^N} V(\varepsilon x + (0, \varepsilon z_\varepsilon^j)) u_\varepsilon (\cdot + (0,z_\varepsilon^j)) \widetilde{u}_j  \, dx \to \int_{\R^N} k_j  \widetilde{u}_j^2 \, dx, \\
&\int_{\R^N} V_\varepsilon(x) | \widetilde{u}_j (\cdot - (0,z_\varepsilon^j)) |^2 \, dx = \int_{\R^N} V(\varepsilon x + (0,\varepsilon z_\varepsilon^j)) \widetilde{u}_j^2 \, dx\to \int_{\R^N} k_j  \widetilde{u}_j^2 \, dx.
\end{align*}
Hence
$$
\int_{\R^N} V_\varepsilon(x) u_\varepsilon^2 \, dx \to \sum_{j \in J} \int_{\R^N} k_j \widetilde{u}_j^2 \, dx.
$$

As a corollary of Lemma \ref{ineq1} and Lemma \ref{ineq2}, we get that $\limsup_{\varepsilon \to 0^+} c_\varepsilon \in [ m_{V_0}, m_{V_\infty} ]$. Therefore we may assume that, up to a subsequence, $c_\varepsilon \to m$, where $m \in [ m_{V_0}, m_{V_\infty} ]$.

To show that $\# I \leq \frac{m_{V_\infty}}{m_{V_0}}$ observe that
\begin{equation}\label{c-ineq}
m_{V_\infty} \geq m = \lim_{\varepsilon \to 0^+} \cJ_\varepsilon (u_\varepsilon) = \sum_{j \in I} \Phi_{k_j} (\widetilde{u}_j) \geq \sum_{j \in I} m_{k_j} \geq m_{V_0} \# I.
\end{equation}
\end{proof}

\begin{Rem}\label{rem1}
Observe that, if at least one of $k_j =V_\infty$, then $\#I = 1$ and
$$
u_\varepsilon - U(\cdot - (0, z_\varepsilon)) \to 0
$$
for some weak solution $U$ of \eqref{eq:limiting} with $k =V_\infty$. In this case $|\varepsilon z_\varepsilon| \to \infty$. Indeed, if at least one of $k_j =V_\infty$, from \eqref{c-ineq} we get
$$
m_{V_\infty} \geq \sum_{j \in I} m_{k_j} \geq m_{V_\infty} + (\#I - 1) m_{V_0}
$$
and $\# I = 1$.
\end{Rem}

\begin{proof}[Proof of Theorem \ref{thm:main2}]
The statement follows directly from Lemma \ref{lem:decomp_final} and Remark \ref{rem1}.
\end{proof}

\begin{proof}[Proof of Theorem \ref{thm:main3}]
	The statement follows directly from Theorem \ref{thm:main2} and Corollary \ref{cor:decay}. 
\end{proof}

\section*{Acknowledgement}

Bartosz Bieganowski, Adam Konysz and Jarosław Mederski were partly supported by the National Science Centre, Poland (Grant No. 2017/26/E/ST1/00817).

\end{document}